\numberwithin{equation}{section}
\newtheorem{theorem}{Theorem}[section]
\newtheorem{lemma}[theorem]{Lemma}
\newtheorem{proposition}[theorem]{Proposition}
\newtheorem{corollary}[theorem]{Corollary}
\theoremstyle{definition}
\numberwithin{equation}{section}
\newcommand{\D}{\mathcal{D}}
\newcommand{\h}{\mathscr{H}}
\newcommand{\B}{\mathfrak{B}}
\newcommand{\R}{\mathbb{R}}
\newcommand{\N}{\mathbb{N}}
\newcommand{\C}{\mathbb{C}}
\newcommand{\T}{\mathbb{T}}
\newcommand{\TT}{\mathbb{T}^{\infty}}
\newcommand{\DD}{\mathbb{D}}
\begin{document}
\title[Volterra operators between Hardy spaces]{Volterra operators between Hardy spaces of vector-valued Dirichlet series}
\date{April 7, 2024.
%\newline \indent $^{*}$ Corresponding author
}

\author[J. Chen]{Jiale Chen}
\address{Jiale Chen, School of Mathematics and Statistics, Shaanxi Normal University, Xi'an 710119, China.}
\email{jialechen@snnu.edu.cn}

%\dedicatory{This paper is dedicated to Professor ABCD}
\thanks{This work was supported by the Fundamental Research Funds for the Central Universities (No. GK202207018) of China.}

\subjclass[2020]{Primary 47G10, 30B50, 42B30; Secondary 46E40.}
\keywords{Volterra operator, vector-valued Dirichlet series, Hardy space, Littlewood--Paley inequality.}

%\date{Received: xxxxxx; Revised: yyyyyy; Accepted: zzzzzz.
%\newline \indent $^{*}$ Corresponding author

\begin{abstract}
  \noindent Let $2\leq p<\infty$ and $X$ be a complex infinite-dimensional Banach space. It is proved that if $X$ is $p$-uniformly PL-convex, then there is no nontrivial bounded Volterra operator from the weak Hardy space $\mathscr{H}^{\text{weak}}_p(X)$ to the Hardy space $\mathscr{H}^+_p(X)$ of vector-valued Dirichlet series. To obtain this, a Littlewood--Paley inequality for Dirichlet series is established.
\end{abstract}
\maketitle

%%%%%%%%%%%%%%%%%%%%%%%%%%%%%%%%%%%%%%%%%%%%%%%%%%%%%%%%%%%%%%%%%
%%%%%%%%%%%%%%%%%%%%%%%%%%%%%%%%%%%%%%%%%%%%%%%%%%%%%%%%%%%%%%%%%

\section{Introduction}
\allowdisplaybreaks[4]
%\bigskip

Throughout the paper, $X$ will always be a complex Banach space. Let $\D(X)$ be the space of Dirichlet series $\sum_{n\geq1}x_nn^{-s}$ with $\{x_n\}_{n\geq1}\subset X$ that converge at some point $s_0\in\C$, and let $\mathcal{P}(X)$ be the space of $X$-valued Dirichlet polynomials $\sum_{n=1}^Nx_nn^{-s}$. Recently, the study of functional-analytic aspects of the theory of (vector-valued) Dirichlet series has attracted great attention; see \cite{Bo15,CDS14,CDS18,CMS,CMST,DGMP,DP,DPS,DSS} and the references therein. In this note, we are going to investigate the properties of Volterra operators between some Hardy spaces of vector-valued Dirichlet series.

To clarify the definition of Hardy spaces of vector-valued Dirichlet series, we need the following notions. We denote by $\TT$ the infinite-dimensional complex polytorus carrying a normalized Haar measure $m_{\infty}$ that is induced by the normalized Lebesgue measure $m$ on the unit circle $\T\subset\C$. Given $1\leq p<\infty$, let $L_p(\TT,X)$ be the space of $p$-Bochner integrable functions $F:\TT\to X$ with respect to the Haar measure $m_{\infty}$. For any multi-index $\nu=(\nu_1,\dots,\nu_n,0,\dots)\in\mathbb{Z}^{(\infty)}$ (the set of eventually null sequences of integers) the $\nu$th Fourier coefficient $\widehat{F}(\nu)$ of $F\in L_1(\TT,X)$ is given by
$$\widehat{F}(\nu):=\int_{\TT}F(z)z^{-\nu}dm_{\infty}(z).$$
For $1\leq p<\infty$, the Hardy space $H_p(\TT,X)$ is defined as the closed subspace of $L_p(\TT,X)$ consisting of those functions $F$ with $\widehat{F}(\nu)=0$ for all $\nu\in\mathbb{Z}^{(\infty)}\setminus\mathbb{N}_0^{(\infty)}$, 
where $\mathbb{N}_0^{(\infty)}$ denotes the set of $\nu$'s in $\mathbb{Z}^{(\infty)}$ with $\nu_j\in\mathbb{N}_0:=\mathbb{N}\cup\{0\}$ for all $j$.

Let $\mathfrak{p}=\{\mathfrak{p}_j\}_{j\geq1}$ be the increasing sequence of prime numbers. Given $\nu\in\mathbb{Z}^{(\infty)}$, we write $\mathfrak{p}^{\nu}:=\mathfrak{p}_1^{\nu_1}\mathfrak{p}_2^{\nu_2}\cdots$. By the fundamental theorem of arithmetic, for any $n\in\mathbb{N}$  there exists a unique multi-index $\nu(n)\in\mathbb{N}_0^{(\infty)}$ such that $n=\mathfrak{p}^{\nu(n)}$. Recall that every $F\in L_1(\TT,X)$ is uniquely determined by its Fourier coefficients $\{\widehat{F}(\nu)\}_{\nu\in\mathbb{Z}^{(\infty)}}$. Consequently, for every $F\in H_1(\TT,X)$ we may define its Bohr transform $\B(F)$ as the following $X$-valued Dirichlet series:
$$\B(F)(s):=\sum_{n=1}^{\infty}\widehat{F}\big(\nu(n)\big)n^{-s}.$$
Then the Hardy space $\h_p(X)$ of $X$-valued Dirichlet series is defined as the image of $H_p(\TT,X)$ under the Bohr transform $\B$, endowed with the norm
$$\|f\|_{\h_p(X)}:=\left\|\B^{-1}(f)\right\|_{H_p(\TT,X)},\quad f\in\h_p(X).$$
This scale of Hardy spaces was introduced in \cite{Ba,HLS} for scalar-valued Dirichlet series, and in \cite{CDS14} for Dirichlet series with values in a Banach space. We refer to \cite{DGMS,QQ} for more information.

We will also consider some larger Hardy spaces of vector-valued Dirichlet series. Given $u\in\C$ and $f\in\D(X)$, let $f_u$ denote the translation of $f$ by $u$, i.e. $f_u(\cdot):=f(\cdot+u)$. For $1\leq p<\infty$, the Hardy space $\h^+_p(X)$, introduced in \cite{DP}, consists of $f\in\D(X)$ such that $f_{\sigma}\in\h_p(X)$ for any $\sigma>0$ and
$$\|f\|_{\h^+_p(X)}:=\sup_{\sigma>0}\|f_{\sigma}\|_{\h_p(X)}<\infty.$$
It was shown in \cite{DP} that $\h_p(X)$ is isometrically embedded into $\h^+_p(X)$, and $\h_p(X)=\h^+_p(X)$ if and only if $X$ has the analytic Radon--Nikod\'{y}m property. For $1\leq p<\infty$, let $\h^{\text{weak}}_p(X)$ be the weak version of Hardy space of Dirichlet series in $\D(X)$. More precisely, the space $\h^{\text{weak}}_p(X)$ consists of Dirichlet series $f\in\D(X)$ such that $x^*\circ f\in\h_p(\C)$ for every $x^*\in X^*$ and
$$\|f\|_{\h^{\text{weak}}_p(X)}:=\sup_{x^*\in B_{X^*}}\|x^*\circ f\|_{\h_p(\C)}<\infty,$$
where $B_{X^*}$ is the closed unit ball of $X^*$. It is clear that $\h^+_p(X)\subset\h^{\text{weak}}_p(X)$, and by \cite[Example 15]{La05}, $\h^+_p(X)\subsetneq\h^{\text{weak}}_p(X)$ and $\|\cdot\|_{\h^{\text{weak}}_p(X)}$ is not equivalent to $\|\cdot\|_{\h^+_p(X)}$ on $\h^+_p(X)$ if $X$ is infinite-dimensional (see also \cite{FGP,LaTy}).

Given $g\in\D(\C)$, the Volterra operator $T_g$ is defined for $f\in\D(X)$ by
$$T_gf(s):=-\int_s^{+\infty}f(u)g'(u)du,$$
where $\Re s$ is large enough. This operator was first introduced by Pommerenke \cite{Po} in the setting of analytic functions on the unit disk $\DD$ of $\C$, and the above Dirichlet series analogy was defined by Brevig, Perfekt and Seip in \cite{BPS}, where they gave some necessary and sufficient conditions for the boundedness of $T_g$ acting on the Hardy spaces $\h_p(\C)$. Motivated by this, and in view of the fact that $\h^+_p(X)$ and $\h^{\text{weak}}_p(X)$ are essentially different spaces for any infinite-dimensional Banach space $X$, we here investigate the Volterra operators $T_g$ that are bounded from $\h^{\text{weak}}_p(X)$ to $\h^+_p(X)$. This problem was initially considered by Laitila, Tylli and Wang \cite{LTW} for composition operators in the setting of Hardy and Bergman spaces of vector-valued analytic functions on $\DD$. Later on, Chen and Wang \cite{CW} characterized the Volterra operators that are bounded from weak to strong Hardy (and Bergman) spaces of vector-valued analytic functions on $\DD$.

To state our main result, we need the notion of uniform PL-convexity of a complex Banach space. For $1\leq p<\infty$, the modulus of PL-convexity $\delta^X_p(\epsilon)$ ($\epsilon>0$) of the space $X$ is defined by
$$\delta^X_p(\epsilon):=\inf\left\{\left(\int_{\T}\|x+\xi y\|^pdm(\xi)\right)^{1/p}-1:x,y\in X,\ \|x\|_X=1,\ \|y\|_X=\epsilon\right\}.$$
The space $X$ is said to be uniformly PL-convex if $\delta^X_1(\epsilon)>0$ for all $\epsilon>0$, and for $2\leq p<\infty$, $X$ is said to be $p$-uniformly PL-convex if there exists $C>0$ such that $\delta^X_p(\epsilon)\geq C\epsilon^p$ for all $\epsilon>0$. It is well-known (see \cite{DGT}) that $X$ is $p$-uniformly PL-convex if and only if there exists $C>0$ such that
$$\int_{\T}\|x+\xi y\|^p_Xdm(\xi)\geq\|x\|^p_X+C\|y\|^p_X,\quad \forall x,y\in X.$$

For $1\leq p<\infty$, let $H_p(\DD,X)$ be the Hardy space consisting of $X$-valued analytic functions $f$ on the unit disk $\DD$ such that 
$$\|f\|_{H_p(\DD,X)}:=\sup_{0<r<1}\left(\int_{\T}\|f(r\xi)\|^p_Xdm(\xi)\right)^{1/p}<\infty.$$
The corresponding weak version $H^{\text{weak}}_p(\DD,X)$ can be defined as before. It was shown in \cite{CW} that for $2\leq p<\infty$ and any infinite-dimensional $p$-uniformly PL-convex space $X$, the boundedness of Volterra operators from $H^{\text{weak}}_p(\DD,X)$ to $H_p(\DD,X)$ is related to the membership of Schatten $p$-class of Volterra operators on the Hardy space $H_2(\DD,\C)$. On the other hand, Brevig, Perfekt and Seip \cite[Theorem 7.2]{BPS} proved that there is no nontrivial Volterra operator $T_g$ in the Schatten class $S_p(\h_2(\C))$ for all $0<p<\infty$. Hence it is reasonable to expect that for $2\leq p<\infty$ and any infinite-dimensional $p$-uniformly PL-convex space $X$, there is no nontrivial bounded Volterra operator $T_g$ from $\h^{\text{weak}}_p(X)$ to $\h^+_p(X)$. Our main result establishes that this is the case.

\begin{theorem}\label{main}
Let $2\leq p<\infty$, $g\in\D(\C)$, and let $X$ be infinite-dimensional and $p$-uniformly PL-convex. If $T_g:\h_p^{\text{weak}}(X)\to\h_p^+(X)$ is bounded, then $g$ is constant.
\end{theorem}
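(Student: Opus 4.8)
The plan is to argue by contradiction: assume $T_g$ is bounded from $\h^{\text{weak}}_p(X)$ to $\h^+_p(X)$ but $g$ is nonconstant, and construct a sequence of test functions that violates boundedness. Since $X$ is infinite-dimensional, pick a normalized basic sequence $\{e_k\}_{k\geq1}\subset X$ (or use a biorthogonal system $\{e_k,e_k^*\}$ with $\|e_k\|=1$, $\|e_k^*\|$ uniformly bounded — available by Hahn--Banach together with the infinite dimension). The key device will be to feed $T_g$ Dirichlet series of the form $f(s)=\sum_{k}e_k\,h_k(s)$ where the $h_k\in\h_p(\C)$ are chosen to be ``spread out'' — for instance translates/dilations of a single function supported on disjoint blocks of primes, or lacunary-type Dirichlet series — so that the $\h^{\text{weak}}_p(X)$-norm of $f$ stays bounded (because for each fixed $x^*$, the scalar series $x^*\circ f=\sum_k x^*(e_k)h_k$ has small norm by near-orthogonality or disjointness of supports, controlled by $\sup_k\|h_k\|_{\h_p}$), while $\|T_gf\|_{\h^+_p(X)}$ is forced to grow.

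The engine that makes the target norm grow is the Littlewood--Paley inequality for Dirichlet series advertised in the abstract: for $2\leq p<\infty$ and $F\in\h^+_p(X)$ one has a lower bound of the type
\[
\|F\|^p_{\h^+_p(X)}\gtrsim |\widehat{F}(0)|^p+\int_0^\infty\!\!\int_{\TT}\|(\partial F_\sigma)(z)\|_X^p\,\sigma^{p-1}\,dm_\infty(z)\,d\sigma,
\]
or more precisely its vector-valued analogue built from the $p$-uniform PL-convexity inequality $\int_\T\|x+\xi y\|^p\,dm(\xi)\ge\|x\|^p+C\|y\|^p$. Applying this to $F=T_gf$ and using $(T_gf)'=f\,g'$ (the defining property of the Volterra operator, valid for $\Re s$ large and then extended), the area-integral picks up $\|f(u)\|_X^p|g'(u)|^p$ weighted appropriately. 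If $g$ is nonconstant then $g'\not\equiv0$, so there is a half-plane region where $|g'|$ is bounded below on a set of positive measure; choosing the $h_k$ to concentrate there makes the right-hand side of the Littlewood--Paley bound at least a fixed positive constant times $N$ when $f=\sum_{k=1}^N e_k h_k$ (each summand contributes independently because of disjoint spectral support), whereas $\|f\|_{\h^{\text{weak}}_p(X)}=O(1)$. Letting $N\to\infty$ contradicts boundedness.

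The main obstacle will be establishing and correctly applying the Littlewood--Paley inequality in the vector-valued $\h^+_p$ setting, together with the mechanism that a bounded-in-$\h^{\text{weak}}_p$ sequence can be built whose images have norms tending to infinity — i.e.\ verifying that the scalar norms $\|x^*\circ f\|_{\h_p(\C)}$ really do stay uniformly bounded over $x^*\in B_{X^*}$ while the full vector norm of $T_gf$ blows up. This is exactly where infinite-dimensionality and the gap $\h^+_p(X)\subsetneq\h^{\text{weak}}_p(X)$ (cited from \cite{La05}) are used: one leverages that the natural ``diagonal'' operator $\sum_k e_k h_k$ has $\h^{\text{weak}}_p$-norm comparable to $\sup_k\|h_k\|_{\h_p}$ but genuinely larger $\h^+_p$-norm behaviour. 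A secondary technical point is the transference between $\DD$ and the Dirichlet-series/polytorus picture: one wants to either run the argument directly on $\TT$ via the Bohr lift, or reduce to the one-variable disk statement by testing on $g$ that depends on finitely many primes and invoking the Hardy-space structure $\h_p(\C)\cong H_p(\TT,\C)$; I would do the former to keep $g$ general. Apart from these, the construction of the test functions and the estimate of their weak norm are expected to be routine given the cited facts.
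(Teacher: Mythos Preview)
Your overall strategy is correct and matches the paper: use the Littlewood--Paley inequality (Theorem~\ref{L-P}) to bound $\|T_gf\|_{\h^+_p(X)}$ from below by an area integral of $\|f_\sigma g'_\sigma\|_{\h_p(X)}$, build test functions that stay bounded in $\h^{\text{weak}}_p(X)$, and force the area integral to diverge. However, the specific construction you propose has a genuine gap.

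The problem is your choice of test sequence. A generic normalized basic sequence or biorthogonal system $\{e_k,e_k^*\}$ does \emph{not} give uniform control of $\|f\|_{\h^{\text{weak}}_p(X)}$. Concretely, take $X=\ell^1$ (which is $2$-uniformly PL-convex) with the standard basis, and $f_N(s)=\sum_{k=1}^N e_k\,\mathfrak{p}_k^{-s}$. For $x^*=(1,\dots,1,0,\dots)\in B_{\ell^\infty}$ one gets $x^*\circ f_N=\sum_{k\le N}\mathfrak{p}_k^{-s}$, whose $\h_p$-norm is $\asymp\sqrt{N}$ by the Steinhaus--Khintchine inequality. So $\|f_N\|_{\h^{\text{weak}}_p(X)}\to\infty$ and no contradiction results. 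Your claim that the weak norm is comparable to $\sup_k\|h_k\|_{\h_p}$ is simply false in this generality.

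What the paper does instead is invoke Dvoretzky's theorem: for each $N$ it fixes a $(1+\epsilon)$-embedding $E_N:\ell_2^N\to X$ and sets $x_n^{(N)}=E_Ne_n$. This purchase two things simultaneously. First, the weak norm is bounded because $\sup_{x^*\in B_{X^*}}(\sum_n|x^*(x_n^{(N)})|^2)^{1/2}=\|E_N^*\|\le1$; combined with Bayart's completely multiplicative multiplier lemma (Lemma~\ref{multi}) to pass from $\h_2$ to $\h_p$, this yields $\|f_N\|_{\h^{\text{weak}}_p(X)}\lesssim1$. Second, and crucially, the pointwise norm $\|\B^{-1}((f_N)_\sigma)(z)\|_X\asymp(\sum_n\lambda_n^2 n^{-2\sigma})^{1/2}$ is \emph{independent of} $z\in\TT$, so it factors out of the integral $\int_{\TT}\|\B^{-1}((f_N)_\sigma)(z)\|_X^p|\B^{-1}(g'_\sigma)(z)|^p\,dm_\infty(z)$, leaving $\|g'_\sigma\|_{\h_p(\C)}^p$ times a quantity that diverges via $\sum_j\mathfrak{p}_j^{-1}=\infty$. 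Your ``disjoint spectral support'' mechanism does not deliver this factoring, because $g'$ need not respect any block structure, and without the $\ell_2$-geometry you cannot compute $\|f(z)\|_X$ pointwise.
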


In order to prove the above theorem, we need to estimate the norm of $f\in\h^+_p(X)$ from below via its derivative $f'$. A classical result of this style is the Littlewood--Paley inequality (see \cite{LP36} or \cite[Theorem 4.4.4]{JVA}), which indicates that if $2\leq p<\infty$, then there exists $C>0$ such that for any $f\in H_p(\DD,\C)$,
$$\|f\|_{H_p(\DD,\C)}\geq\left(|f(0)|^p+C\int_{\DD}|f'(\xi)|^p(1-|\xi|^2)^{p-1}dA(\xi)\right)^{1/p},$$
where $dA$ is the Lebesgue area measure on $\C$. Vector-valued versions of Littlewood--Paley theory have been considered by several authors for various reasons. In particular, Blasco and Pavlovi\'{c} \cite{BP} proved that for $2\leq p<\infty$, the Banach space $X$ is $p$-uniformly PL-convex if and only if there exists $C>0$ such that for every $f\in H_p(\DD,X)$,
\begin{equation}\label{LiPa}
	\|f\|_{H_p(\DD,X)}\geq\left(\|f(0)\|^p_X+C\int_{\DD}\|f'(\xi)\|^p_X(1-|\xi|^2)^{p-1}dA(\xi)\right)^{1/p}.
\end{equation}
Based on this inequality, we can establish the following Littlewood--Paley inequality for vector-valued Dirichlet series, which plays an essential role in the proof of Theorem \ref{main}.

\begin{theorem}\label{L-P}
Let $2\leq p<\infty$ and $X$ be a $p$-uniformly PL-convex space. Then there exists $C>0$ such that for any $f\in\h^+_p(X)$,
$$\|f(+\infty)\|_X+\left(\int_0^{+\infty}\|f'_{\sigma}\|^p_{\h_p(X)}\sigma^{p-1}d\sigma\right)^{1/p}\leq C\|f\|_{\h^+_p(X)}.$$
\end{theorem}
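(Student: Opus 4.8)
The plan is to transfer the disk-theoretic Littlewood--Paley inequality \eqref{LiPa} to the Dirichlet-series setting by slicing the half-plane into vertical lines and comparing each line with a radial average on the disk. Fix $f\in\h^+_p(X)$ with $\|f\|_{\h^+_p(X)}=1$; we may assume $f$ is a Dirichlet polynomial by a standard approximation/density argument, taking care that both sides behave well under the limit. For a fixed $\sigma>0$, the translate $f_\sigma$ lies in $\h_p(X)$, and its Bohr lift $F_\sigma=\B^{-1}(f_\sigma)\in H_p(\TT,X)$. The key device is that vertical translation in the Dirichlet variable corresponds, after composing with the Kronecker flow, to a rotation/dilation on the torus; more precisely, for the one-variable reduction we will integrate over the polytorus and, for $m_\infty$-a.e.\ $z\in\TT$, consider the single-variable analytic function $\varphi_{z}(w)$ obtained by restricting the abscissa-parametrised family to a complex disk. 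First I would make this precise: for $w\in\DD$ set $\varphi_z(w)$ to be the value of $f$ along an appropriate vertical line through the point determined by $z$, so that $\varphi_z\in H_p(\DD,X)$ with $\|\varphi_z\|_{H_p(\DD,X)}^p$ integrating over $z$ to something controlled by $\|f\|_{\h^+_p(X)}^p$, and with $\varphi_z(0)=f(+\infty)$ for every $z$ (the constant term of the Dirichlet series).

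Next I would apply \eqref{LiPa} to each $\varphi_z$ and integrate in $z$ over $\TT$. This yields
\[
\|f(+\infty)\|_X^p + C\int_{\TT}\int_{\DD}\|\varphi_z'(w)\|_X^p(1-|w|^2)^{p-1}\,dA(w)\,dm_\infty(z)\;\le\;\int_{\TT}\|\varphi_z\|_{H_p(\DD,X)}^p\,dm_\infty(z)\;\le\;C'\|f\|_{\h^+_p(X)}^p.
\]
The remaining work is to identify the double integral on the left with $\int_0^{+\infty}\|f'_\sigma\|_{\h_p(X)}^p\sigma^{p-1}\,d\sigma$ up to a constant. Here the chain rule relates $\varphi_z'(w)$ to $f'$ evaluated at the corresponding point of the half-plane, the factor $(1-|w|^2)^{p-1}$ becomes (comparable to) $\sigma^{p-1}$ after the change of variables sending the disk to a strip/half-plane neighbourhood of the line $\Re s=\sigma$, and the $z$-integral reconstitutes the full $H_p(\TT,X)$-norm of $F'_\sigma$, i.e.\ $\|f'_\sigma\|_{\h_p(X)}$, by definition of the Hardy norm of Dirichlet series. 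One must check that the Jacobian and the weight combine correctly and that Fubini is justified (all integrands are nonnegative, so this is automatic once finiteness on the right is known).

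I expect the main obstacle to be setting up the slicing map cleanly: the passage from the half-plane picture (translations $f_\sigma$, the variable $\sigma>0$, the boundary torus $\TT$) to a genuine family of disk functions $\varphi_z$ to which \eqref{LiPa} literally applies, with the constant term $f(+\infty)$ correctly appearing as $\varphi_z(0)$ and with the weight $(1-|w|^2)^{p-1}$ transforming to $\sigma^{p-1}$ up to bounded factors. A technically cleaner route, which I would use if the direct change of variables is awkward, is to first prove the inequality for $f\in\h_p(X)$ (where $\B^{-1}f\in H_p(\TT,X)$ genuinely exists) by the same torus-slicing argument on a fixed abscissa, obtain a bound with a constant independent of the abscissa, and then let the abscissa tend to $0$ and take the supremum, using monotone convergence on the left and the definition $\|f\|_{\h^+_p(X)}=\sup_{\sigma>0}\|f_\sigma\|_{\h_p(X)}$ on the right. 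The other point requiring care is the convergence of the $\sigma$-integral near $0$ and near $+\infty$: near $+\infty$ one uses that $f'_\sigma\to 0$ rapidly (the Dirichlet series of $f'$ has no constant term), and near $0$ the weight $\sigma^{p-1}$ with $p\ge 2$ provides integrability against the bounded quantity $\|f'_\sigma\|_{\h_p(X)}$, which is controlled once $f$ is, say, a polynomial.
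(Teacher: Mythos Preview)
Your outline is essentially the same strategy the paper uses: apply the disk inequality \eqref{LiPa} to one-variable slices $P_w\circ\phi$ (where $P_w$ is the polynomial with coefficients twisted by $w\in\TT$), integrate over $w$ using rotation invariance (Lemma~\ref{ww}) to reconstitute the $\h_p(X)$-norms, prove the inequality first on $\h_p(X)$ via polynomial density, and then pass to $\h^+_p(X)$ by applying it to $f_\delta$ and letting $\delta\to0$ via monotonicity (Lemma~\ref{decr}).

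The one place your sketch is too optimistic is the assertion that $\varphi_z(0)=f(+\infty)$ and that $(1-|w|^2)^{p-1}$ becomes $\sigma^{p-1}$ directly. No conformal map $\phi:\DD\to\C_0$ sends $0$ to $+\infty$, and after the change of variables the weight picks up a Poisson-type factor $\eta/((\sigma+\eta)^2+t^2)$ rather than simply $\sigma^{p-1}$. The paper handles both issues at once by using the Cayley transform $\phi_\eta(\xi)=\eta(1+\xi)/(1-\xi)$ with a \emph{free parameter} $\eta>0$: one first obtains
\[
\|P_\eta\|_{\h_p(X)}^p + C\int_0^{+\infty}\|P'_\sigma\|_{\h_p(X)}^p\,\frac{\sigma^{p-1}\eta}{\sigma+\eta}\,d\sigma \le \|P\|_{\h_p(X)}^p
\]
(using that the Poisson kernel in $t$ is a probability measure), and then sends $\eta\to+\infty$ by dominated/monotone convergence to get $\|P(+\infty)\|_X$ on the left and the clean weight $\sigma^{p-1}$. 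This is exactly the ``slicing map'' you flagged as the main obstacle; once you insert $\phi_\eta$ and this limiting argument, your plan goes through verbatim.
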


For any $\alpha>-1$ and $1\leq p<\infty$, we define the McCarthy--Dirichlet space $\mathscr{D}^p_{\alpha}(X)$ of $X$-valued Dirichlet series as the completion of $\mathcal{P}(X)$ with respect to the norm
$$\|P\|_{\mathscr{D}^p_{\alpha}(X)}:=\|P(+\infty)\|_X+\left(\int_0^{+\infty}\|P'_{\sigma}\|^p_{\h_p(X)}\sigma^{\alpha}d\sigma\right)^{1/p},
\quad P\in\mathcal{P}(X).$$
Then Theorem \ref{L-P} can be restated as follows: if $2\leq p<\infty$ and $X$ is a $p$-uniformly PL-convex Banach space, then we have the bounded inclusion
$$\h^+_p(X)\subset\mathscr{D}^p_{p-1}(X).$$
This can be compared with the inclusion between classical Hardy and Dirichlet spaces over the unit disk.

Theorems \ref{main} and \ref{L-P} are proven in Section \ref{proofs}. We also give some remarks regarding the Littlewood--Paley inequalities of Dirichlet series in Section \ref{remark}.

Throughout the paper, the letter $C$ always denotes a positive constant whose value is not essential and may change from one occurrence to the next. We also write $A\lesssim B$ or $B\gtrsim A$ if $A\leq CB$ for some inessential constant $C>0$. For a Dirichlet series $f(s)=\sum_{n=1}^{\infty}x_nn^{-s}$, we always use $f(+\infty)$ to denote $x_1$.

\section{Proofs of Theorems \ref{main} and \ref{L-P}}\label{proofs}

In this section, we are going to prove Theorems \ref{main} and \ref{L-P}. Before proceeding, we introduce some auxiliary results.

We first explain more about the definition of the Hardy spaces $\h_p(X)$. Due to the definition, a Dirichlet series $f(s)=\sum_{n=1}^{\infty}x_nn^{-s}$ belongs to $\h_p(X)$ if and only if there exists $F\in H_p(\TT,X)$ such that $\widehat{F}\big(\nu(n)\big)=x_n$ for every $n\in\mathbb{N}$. In this case, one has $\|f\|_{\h_p(X)}=\|F\|_{H_p(\TT,X)}$. In particular, for any Dirichlet polynomial $P(s)=\sum_{n=1}^Nx_nn^{-s}$, $\B^{-1}(P)(z)=\sum_{n=1}^Nx_nz^{\nu(n)}$, and
\begin{equation}\label{polynomial}
\|P\|_{\h_p(X)}=\left(\int_{\TT}\left\|\sum_{n=1}^Nx_nz^{\nu(n)}\right\|^p_Xdm_{\infty}(z)\right)^{1/p}.
\end{equation}
It is clear that for $1\leq p<\infty$ the Bohr transform $\B$ is an isometric isomorphism from $H_p(\TT,X)$ onto $\h_p(X)$. There are two elementary consequences of this fact.
\begin{enumerate}[(i)]
	\item The coefficients of a Dirichlet series $f\in\h_p(X)$ are bounded by $\|f\|_{\h_p(X)}$. Consequently, if we use $c_n(f)$ to denote the $n$th Dirichlet coefficient of $f\in\D(X)$, then the convergence $f_j\to f$ in $\h_p(X)$ implies the convergence $c_n(f_j)\to c_n(f)$ for every $n\in\mathbb{N}$.
	\item The set $\mathcal{P}(X)$ of Dirichlet polynomials is dense in $\h_p(X)$ for $1\leq p<\infty$ (see \cite[Proposition 24.6]{DGMS}).
\end{enumerate}

The following lemma concerns the horizontal translation of Dirichlet series in Hardy spaces, which can be found in \cite[Proposition 2.3]{DP}.

\begin{lemma}\label{decr}
	Suppose $1\leq p<\infty$ and $f\in\h_p(X)$. Then for any $\sigma>0$, $f_{\sigma}\in\h_p(X)$. Moreover, the function $\sigma\mapsto\|f_{\sigma}\|_{\h_p(X)}$ is decreasing on $[0,\infty)$.
\end{lemma}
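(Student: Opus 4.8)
The plan is to move the statement, via the Bohr transform, to a contractivity statement for a product of one--variable disk dilations on the polytorus. Put $r_j:=\mathfrak{p}_j^{-\sigma}\in[0,1)$. If $P(s)=\sum_{n=1}^{N}x_nn^{-s}\in\mathcal{P}(X)$ uses only the first $d$ primes, then $n^{-\sigma}=\prod_{j\le d}r_j^{\nu_j(n)}$, so with $Q:=\B^{-1}(P)$, an $X$--valued analytic polynomial in $z_1,\dots,z_d$, identity \eqref{polynomial} gives
$$\B^{-1}(P_{\sigma})(z)=\sum_{n=1}^{N}x_nn^{-\sigma}z^{\nu(n)}=Q(r_1z_1,\dots,r_dz_d).$$
In other words $\B^{-1}(P_{\sigma})$ arises from $Q$ by dilating the $j$th variable by $r_j$ for $j=1,\dots,d$, and $\|P_{\sigma}\|_{\h_p(X)}$ and $\|P\|_{\h_p(X)}$ are the $L_p(\T^d,X)$--norms of $z\mapsto Q(r_1z_1,\dots,r_dz_d)$ and of $Q$.

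The only genuine analysis is the scalar one--variable estimate: if $q(\zeta)=\sum_{k\ge0}a_k\zeta^k$ is an $X$--valued analytic polynomial and $0\le r<1$, then $\int_{\T}\|q(r\xi)\|_X^p\,dm(\xi)\le\int_{\T}\|q(\xi)\|_X^p\,dm(\xi)$. Indeed $q(r\,\cdot\,)=P_r*q$ for the normalized Poisson kernel $P_r$ (so $P_r\ge0$, $\int_{\T}P_r\,dm=1$); the triangle inequality gives $\|q(r\xi)\|_X\le\big(P_r*\|q\|_X\big)(\xi)$, Jensen's inequality then gives $\|q(r\xi)\|_X^p\le\big(P_r*\|q\|_X^p\big)(\xi)$, and integrating in $\xi$ and using $\int_{\T}(P_r*h)\,dm=\int_{\T}h\,dm$ yields the bound (equivalently: $\|q\|_X^p$ is subharmonic, so its circular means are nondecreasing in the radius). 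Applying this in the variables $z_1,\dots,z_d$ one at a time --- freezing the remaining variables and using Fubini at each step --- gives
$$\|P_{\sigma}\|_{\h_p(X)}\le\|P\|_{\h_p(X)}\qquad\text{for every }P\in\mathcal{P}(X),$$
with constant $1$ independent of $P$ (and of $d$ and $\sigma$).

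To conclude, I would extend and identify. Since $\mathcal{P}(X)$ is dense in $\h_p(X)$ and $P\mapsto P_{\sigma}$ is a contraction on it, it extends uniquely to a contraction $T_{\sigma}$ on $\h_p(X)$. Given $f\in\h_p(X)$, take $P_k\to f$ in $\h_p(X)$; since the coefficient functionals are $\h_p(X)$--continuous, $c_n(T_{\sigma}f)=\lim_k c_n\big((P_k)_{\sigma}\big)=\lim_k n^{-\sigma}c_n(P_k)=n^{-\sigma}c_n(f)$ for each $n$, so $T_{\sigma}f$ and the translate $f_{\sigma}$ have the same Dirichlet coefficients and hence coincide; thus $f_{\sigma}\in\h_p(X)$ and $\|f_{\sigma}\|_{\h_p(X)}\le\|f\|_{\h_p(X)}$. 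Finally, the semigroup identity $(P_{\sigma})_{\tau-\sigma}=P_{\tau}$ on polynomials passes to $T_{\tau}=T_{\tau-\sigma}\circ T_{\sigma}$ for $0\le\sigma\le\tau$, whence $\|f_{\tau}\|_{\h_p(X)}=\|T_{\tau-\sigma}(f_{\sigma})\|_{\h_p(X)}\le\|f_{\sigma}\|_{\h_p(X)}$, so $\sigma\mapsto\|f_{\sigma}\|_{\h_p(X)}$ is nonincreasing on $[0,\infty)$. There is no serious obstacle here: the proof is bookkeeping around the Bohr correspondence, and the only points to watch are that translation really does become a product of commuting one--variable dilations on the torus side (so Fubini applies and the polynomial estimate is uniform in the number of variables), and that the density extension of $P\mapsto P_{\sigma}$ is, by continuity of the coefficient functionals, honestly the translation $f\mapsto f_{\sigma}$.
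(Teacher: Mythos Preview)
Your argument is correct. The paper does not supply its own proof of this lemma but simply cites \cite[Proposition~2.3]{DP}; the approach there is essentially the one you give --- translation becomes a product of one-variable radial dilations under the Bohr transform, each such dilation is an $L_p(\T,X)$-contraction via the Poisson kernel (equivalently, subharmonicity of $\|\cdot\|_X^p$), and density of $\mathcal{P}(X)$ together with continuity of the coefficient functionals extends the estimate from polynomials to all of $\h_p(X)$.
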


For any $P(s)=\sum_{n=1}^Nx_nn^{-s}$ in $\mathcal{P}(X)$ and $w\in\TT$, write
$$P_w(s):=\sum_{n=1}^Na_nw^{\nu(n)}n^{-s}.$$
The following lemma is an immediate consequence of the rotation invariance of the measure $m_{\infty}$ and \eqref{polynomial}.

\begin{lemma}\label{ww}
	Let $1\leq p<\infty$ and $P\in\mathcal{P}(X)$. Then for any $s=\sigma+it\in\C$,
	$$\int_{\TT}\|P_w(s)\|^p_Xdm_{\infty}(w)=\|P_{\sigma}\|^p_{\h_p(X)}.$$
\end{lemma}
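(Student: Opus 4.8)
The plan is to deduce the identity from the rotation invariance of the Haar measure $m_{\infty}$ together with the formula \eqref{polynomial} for the $\h_p(X)$-norm of a Dirichlet polynomial, by recognizing a vertical translation of $P$ as a rotation of the polytorus.

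First I would isolate the effect of the imaginary part of $s$. Writing $s=\sigma+it$ and $P(s)=\sum_{n=1}^{N}x_{n}n^{-s}$, we have $n^{-s}=n^{-\sigma}n^{-it}$, and since $n=\mathfrak{p}^{\nu(n)}$ gives $\log n=\sum_{j}\nu_{j}(n)\log\mathfrak{p}_{j}$, the multiplicativity of $n\mapsto n^{-it}$ yields
$$n^{-it}=e^{-it\log n}=\prod_{j}\big(\mathfrak{p}_{j}^{-it}\big)^{\nu_{j}(n)}=\chi_{t}^{\nu(n)},\qquad \chi_{t}:=\big(\mathfrak{p}_{1}^{-it},\mathfrak{p}_{2}^{-it},\dots\big).$$
Since $|\mathfrak{p}_{j}^{-it}|=1$ for every $j$, we have $\chi_{t}\in\TT$ (and only the finitely many coordinates with $\mathfrak{p}_{j}\le N$ are relevant). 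Consequently, with $\chi_{t}w$ denoting the coordinatewise product in $\TT$,
$$P_{w}(s)=\sum_{n=1}^{N}x_{n}w^{\nu(n)}n^{-s}=\sum_{n=1}^{N}x_{n}n^{-\sigma}(\chi_{t}w)^{\nu(n)}=\B^{-1}(P_{\sigma})(\chi_{t}w),$$
where the last equality uses that $\B^{-1}(P_{\sigma})(z)=\sum_{n=1}^{N}x_{n}n^{-\sigma}z^{\nu(n)}$.

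Then I would integrate over $w\in\TT$ and change variables. Because $m_{\infty}$ is the product of the rotation-invariant measures $m$ on $\T$, it is invariant under the rotation $w\mapsto\chi_{t}w$ of $\TT$, so
$$\int_{\TT}\|P_{w}(s)\|_{X}^{p}\,dm_{\infty}(w)=\int_{\TT}\big\|\B^{-1}(P_{\sigma})(\chi_{t}w)\big\|_{X}^{p}\,dm_{\infty}(w)=\int_{\TT}\big\|\B^{-1}(P_{\sigma})(w)\big\|_{X}^{p}\,dm_{\infty}(w).$$
By \eqref{polynomial} applied to $P_{\sigma}$ (equivalently, since $\|P_{\sigma}\|_{\h_{p}(X)}=\|\B^{-1}(P_{\sigma})\|_{H_{p}(\TT,X)}$), the right-hand side equals $\|P_{\sigma}\|_{\h_{p}(X)}^{p}$, which is the assertion.

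I do not expect a genuine obstacle here; the only step requiring attention is the identification $n^{-it}=\chi_{t}^{\nu(n)}$ with $\chi_{t}\in\TT$, i.e.\ the fact that a vertical shift of a Dirichlet polynomial corresponds, on the Bohr side, to a rotation of the polytorus. Once this is in place, the statement is immediate from the invariance of $m_{\infty}$ and \eqref{polynomial}.
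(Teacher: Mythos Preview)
Your proof is correct and follows exactly the approach the paper indicates: the paper states that the lemma is an immediate consequence of the rotation invariance of $m_{\infty}$ and \eqref{polynomial}, and you have simply spelled out the details by identifying the vertical shift $n^{-it}$ with the rotation $w\mapsto\chi_{t}w$ on $\TT$. Nothing needs to be changed.
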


Given $N\in\mathbb{N}$, let $S_N$ be the partial sum operator defined by
$$S_N\left(\sum_{n=1}^{\infty}x_nn^{-s}\right):=\sum_{n=1}^{N}x_nn^{-s}.$$
The estimates of the operators $S_N$ are crucial for the modern theory of Dirichlet series. It was proved in \cite[Theorem 3.2]{DP} that there exists $C>0$ such that for every $N\in\mathbb{N}$ and every $1\leq p<\infty$,
\begin{equation}\label{partial}
\|S_N\|_{\h^+_p(X)\to\h^+_p(X)}\leq C\log N.
\end{equation}
Based on the above estimate, we can establish the following proposition on derivatives of Dirichlet series in Hardy spaces.

\begin{proposition}\label{deri}
Let $1\leq p<\infty$. Then for any $\sigma>0$ and $f\in\h^+_p(X)$, $f'_{\sigma}\in\h_p(X)$.
\end{proposition}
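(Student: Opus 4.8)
The plan is to realize $f'_\sigma$ as the image, under a rapidly decaying diagonal Fourier multiplier, of the half-way translate $f_{\sigma/2}$, which already lies in $\h_p(X)$, and then to bound that multiplier on $\h_p(X)$ by summing the partial-sum estimate \eqref{partial} by parts.

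Write $f(s)=\sum_{n\geq1}x_nn^{-s}\in\D(X)$. Since $f\in\h^+_p(X)$, by the very definition of $\h^+_p(X)$ we have $f_{\sigma/2}\in\h_p(X)$ with $\|f_{\sigma/2}\|_{\h_p(X)}\leq\|f\|_{\h^+_p(X)}$, and its $n$th Dirichlet coefficient is $n^{-\sigma/2}x_n$. On the other hand $f'_\sigma(s)=\sum_{n\geq1}\big(-(\log n)n^{-\sigma}\big)x_nn^{-s}$, which is again an element of $\D(X)$ (the coefficients $\|x_n\|$ grow at most polynomially, so this series converges absolutely for $\Re s$ large). Hence, setting $\mu_1:=0$ and $\mu_n:=-(\log n)n^{-\sigma/2}$ for $n\geq2$, the Dirichlet series $f'_\sigma$ is exactly what the diagonal multiplier $M_\mu:\sum a_nn^{-s}\mapsto\sum\mu_na_nn^{-s}$ produces from $f_{\sigma/2}$, since $\mu_n\cdot n^{-\sigma/2}x_n=-(\log n)n^{-\sigma}x_n$. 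So it suffices to prove that $M_\mu$ extends to a bounded operator on $\h_p(X)$ and then to identify that extension, evaluated at $f_{\sigma/2}$, with $f'_\sigma$ via comparison of Dirichlet coefficients (using item (i) above, the continuity of $g\mapsto c_n(g)$ on $\h_p(X)$).

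To bound $M_\mu$, work first on $\mathcal{P}(X)$. For a polynomial $P$, Abel summation gives
$$M_\mu P=\sum_{k\geq1}(\mu_k-\mu_{k+1})\,S_kP,$$
a finite sum whose tail telescopes correctly because $\mu_k\to0$. Since $\h_p(X)$ embeds isometrically into $\h^+_p(X)$ and each $S_kP$ is a polynomial, \eqref{partial} yields $\|S_k\|_{\h_p(X)\to\h_p(X)}\lesssim\log(k+1)$, whence
$$\|M_\mu P\|_{\h_p(X)}\leq\sum_{k\geq1}|\mu_k-\mu_{k+1}|\,\|S_kP\|_{\h_p(X)}\lesssim\Big(\sum_{k\geq1}|\mu_k-\mu_{k+1}|\log(k+1)\Big)\|P\|_{\h_p(X)}.$$
The remaining point is the convergence of this last series: applying the mean value theorem to $t\mapsto(\log t)t^{-\sigma/2}$ gives $|\mu_k-\mu_{k+1}|\lesssim_\sigma(\log k)k^{-\sigma/2-1}$ for $k\geq2$, so $\sum_k|\mu_k-\mu_{k+1}|\log(k+1)\lesssim_\sigma\sum_k(\log k)^2k^{-\sigma/2-1}<\infty$. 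Thus $\|M_\mu\|_{\mathcal{P}(X)\to\h_p(X)}\leq C_\sigma$, and since $\mathcal{P}(X)$ is dense in $\h_p(X)$, $M_\mu$ extends to a bounded $\overline{M_\mu}$ on $\h_p(X)$. Approximating $f_{\sigma/2}$ in $\h_p(X)$ by polynomials $P_j$, the images $M_\mu P_j$ converge to $\overline{M_\mu}(f_{\sigma/2})$ in $\h_p(X)$, hence coefficientwise; but coefficientwise they also converge to $f'_\sigma$, so injectivity of the coefficient map forces $f'_\sigma=\overline{M_\mu}(f_{\sigma/2})\in\h_p(X)$.

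The only genuinely non-formal ingredient is the interplay in the series $\sum_k|\mu_k-\mu_{k+1}|\log(k+1)$: the logarithmic growth of $\|S_k\|$ coming from \eqref{partial} is exactly compensated by the decay $k^{-\sigma/2-1}$ of the differences $\mu_k-\mu_{k+1}$, and this decay is available only because we spent half of the translation parameter $\sigma$ on $f_{\sigma/2}$ and kept the other half inside the symbol — the naive symbol $-\log n$ (using $f_\sigma$ directly) would make the series diverge. Everything else is routine bookkeeping: Abel summation, the isometric inclusion $\h_p(X)\subset\h^+_p(X)$ on polynomials, density of $\mathcal{P}(X)$, and coefficient-continuity.
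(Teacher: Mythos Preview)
Your proof is correct and follows essentially the same approach as the paper's: both arguments split the translation parameter as $\sigma=\sigma/2+\sigma/2$, apply Abel summation to the symbol $(\log n)n^{-\sigma/2}$, and control the resulting sums by the partial-sum bound \eqref{partial}, yielding the same estimate $\sum_k(\log k)^2k^{-\sigma/2-1}<\infty$. The only cosmetic difference is that the paper shows directly that the partial sums $\sum_{n\leq N}x_n(\log n)n^{-\sigma/2}n^{-s}$ are Cauchy in $\h^+_p(X)$ and then translates the limit by $\sigma/2$ into $\h_p(X)$, whereas you first pass to $f_{\sigma/2}\in\h_p(X)$ and phrase the remaining step as boundedness of the multiplier $M_\mu$ on $\h_p(X)$.
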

\begin{proof}
Fix $\sigma>0$, and suppose that $f(s)=\sum_{n=1}^{\infty}x_nn^{-s}$ belongs to $\h^+_p(X)$. Then for any $2\leq N<M$, Abel's summation formula yields that
\begin{align*}
\sum_{n=N}^{M}x_nn^{-(s+\frac{\sigma}{2})}\log n
=&\sum_{n=N}^{M-1}\left(\sum_{k=1}^nx_kk^{-s}\right)\left(n^{-\sigma}\log n-(n+1)^{-\sigma}\log(n+1)\right)\\
&+\left(\sum_{k=1}^Mx_kk^{-s}\right)M^{-\sigma}\log M-\left(\sum_{k=1}^{N-1}x_kk^{-s}\right)N^{-\sigma}\log N.
\end{align*}
Taking norms and using \eqref{partial}, we obtain that
\begin{align*}
&\left\|\sum_{n=N}^{M}x_nn^{-(s+\frac{\sigma}{2})}\log n\right\|_{\h^+_p(X)}\\
&\ \lesssim\sum_{n=N}^{M-1}\log n\left|n^{-\sigma}\log n-(n+1)^{-\sigma}\log(n+1)\right|+M^{-\sigma}\log^2M+N^{-\sigma}\log^2N\\
&\ \lesssim\sum_{n=N}^{M-1}n^{-\sigma-1}\log^2n+M^{-\sigma}\log^2M+N^{-\sigma}\log^2N\to0
\end{align*}
as $N,M\to\infty$. Therefore, there exists $g\in\h^+_p(X)$ such that
$$\sum_{n=2}^{N}x_nn^{-(\cdot+\frac{\sigma}{2})}\log n\to g(\cdot)$$
in $\h^+_p(X)$ as $N\to\infty$. Consequently,
$$\sum_{n=2}^{N}x_nn^{-(\cdot+\sigma)}\log n\to g_{\sigma/2}(\cdot)$$
in $\h_p(X)$ as $N\to\infty$. Comparing the Dirichlet coefficients, we finally conclude that $f'_{\sigma}=-g_{\sigma/2}\in\h_p(X)$.
\end{proof}

We are now ready to prove Theorem \ref{L-P}.

\begin{proof}[Proof of Theorem \ref{L-P}]
For $\eta>0$, let $\phi_{\eta}:\DD\to\C_0$ be the Cayley transform defined by
$$\phi_{\eta}(\xi)=\eta\frac{1+\xi}{1-\xi},\quad \xi\in\DD,$$
where $\C_0:=\{s\in\C:\Re s>0\}$. Then for any $P\in\mathcal{P}(X)$ and $w\in\TT$, applying \eqref{LiPa} to the function $P_w\circ\phi_{\eta}$, we obtain that
$$\|P_w(\eta)\|^p_X+C\int_{\DD}\|(P_w\circ\phi_{\eta})'(\xi)\|_X^p(1-|\xi|^2)^{p-1}dA(\xi)\leq\|P_w\circ\phi_{\eta}\|^p_{H_p(\DD,X)}.$$
Using the change of variables $s=\sigma+it=\phi_{\eta}(\xi)$, we get
\begin{align*}
&\int_{\DD}\|(P_w\circ\phi_{\eta})'(\xi)\|_X^p(1-|\xi|^2)^{p-1}dA(\xi)\\
&\ \ =\int_{\DD}\|P'_w(\phi_{\eta}(\xi))\|^p_X|\phi'_{\eta}(\xi)|^p(1-|\xi|^2)^{p-1}dA(\xi)\\
&\ \ =\int_{\C_0}\|P'_w(s)\|^p_X|\phi'_{\eta}(\phi^{-1}_{\eta}(s))|^{p-2}(1-|\phi^{-1}_{\eta}(s)|^2)^{p-1}dA(s)\\
&\ \ =\int_{\C_0}\|P'_w(s)\|^p_X\left(\frac{2\eta}{\left|1-\frac{s-\eta}{s+\eta}\right|^2}\right)^{p-2}
  \left(1-\left|\frac{s-\eta}{s+\eta}\right|^2\right)^{p-1}dA(s)\\
&\ \ =2^p\int_0^{+\infty}\int_{\R}\|P'_w(\sigma+it)\|^p_X\frac{\sigma^{p-1}\eta}{(\sigma+\eta)^2+t^2}dtd\sigma.
\end{align*}
Therefore, we establish that
$$\|P_w(\eta)\|^p_X+C\int_0^{+\infty}\int_{\R}\|P'_w(\sigma+it)\|^p_X\frac{\sigma^{p-1}\eta}{(\sigma+\eta)^2+t^2}dtd\sigma
\leq\|P_w\circ\phi_{\eta}\|^p_{H_p(\DD,X)}.$$
Integrating with respect to $w$ on $\TT$, and using Fubini's theorem, Fatou's lemma, Lemmas \ref{ww} and \ref{decr}, we arrive at
\begin{align*}
&\|P_{\eta}\|^p_{\h_p(X)}+C\int_0^{+\infty}\int_{\R}\|P'_{\sigma}\|^p_{\h_p(X)}\frac{\sigma^{p-1}\eta}{(\sigma+\eta)^2+t^2}dtd\sigma\\
&\ \ =\int_{\TT}\left(\|P_w(\eta)\|^p_X+
  C\int_0^{+\infty}\int_{\R}\|P'_w(\sigma+it)\|^p_X\frac{\sigma^{p-1}\eta}{(\sigma+\eta)^2+t^2}dtd\sigma\right)dm_{\infty}(w)\\
&\ \ \leq\int_{\TT}\lim_{r\to\infty}\int_{\T}\left\|P_w(\phi_{\eta}(r\zeta))\right\|^p_Xdm(\zeta)dm_{\infty}(w)\\
&\ \ \leq\liminf_{r\to1}\int_{\T}\int_{\TT}\|P_w(\phi_{\eta}(r\zeta))\|^p_{X}dm_{\infty}(w)dm(\zeta)\\
&\ \ =\liminf_{r\to1}\int_{\T}\Big\|P\Big(\cdot+\Re(\phi_{\eta}(r\zeta))\Big)\Big\|^p_{\h_p(X)}dm(\zeta)\\
&\ \ \leq\|P\|^p_{\h_p(X)},
\end{align*}
which, together with the fact that $\frac{\sigma+\eta}{\pi((\sigma+\eta)^2+t^2)}dt$ is a probability measure on $\R$, implies that
$$\|P_{\eta}\|^p_{\h_p(X)}+C\int_0^{+\infty}\|P'_{\sigma}\|^p_{\h_p(X)}\frac{\sigma^{p-1}\eta}{\sigma+\eta}d\sigma\leq\|P\|^p_{\h_p(X)}.$$
Letting $\eta\to+\infty$ and using Lebesgue's dominated convergence theorem, we conclude that
$$\|P(+\infty)\|^p_{X}+C\int_0^{+\infty}\|P'_{\sigma}\|^p_{\h_p(X)}\sigma^{p-1}d\sigma\leq\|P\|^p_{\h_p(X)}.$$
Since Dirichlet polynomials are dense in $\h_p(X)$, it follows that there exists $C>0$ such that for any $g\in\h_p(X)$,
\begin{equation}\label{lpp}
\|g(+\infty)\|^p_X+\int_0^{+\infty}\|g'_{\sigma}\|^p_{\h_p(X)}\sigma^{p-1}d\sigma\leq C\|g\|^p_{\h_p(X)}.
\end{equation}
Suppose now that $f\in\h^+_p(X)$. Then for any $\delta>0$, $f_{\delta}\in\h_p(X)$ and by Proposition \ref{deri}, $f'_{\delta}\in\h_p(X)$. Applying \eqref{lpp} to the Dirichlet series $f_{\delta}$ yields that
$$\|f(+\infty)\|^p_X+\int_0^{+\infty}\|f'_{\delta+\sigma}\|^p_{\h_p(X)}\sigma^{p-1}d\sigma
\leq C\|f_{\delta}\|^p_{\h_p(X)}\leq C\|f\|^p_{\h^+_p(X)}.$$
In view of Lemma \ref{decr}, we may let $\delta\to0$ and use Lebesgue's monotone convergence theorem to conclude that
$$\|f(+\infty)\|^p_X+\int_0^{+\infty}\|f'_{\sigma}\|^p_{\h_p(X)}\sigma^{p-1}d\sigma\leq C\|f\|^p_{\h^+_p(X)},$$
which finishes the proof.
\end{proof}

To establish Theorem \ref{main}, we need some more auxiliary results. The following Dvoretzky theorem can be found in \cite[Chapter 19]{DJT}.

\begin{theorem}\label{Dvo}
For any $N\in\N$ and $\epsilon>0$ there is $c(N,\epsilon)\in\N$ so that for any Banach space $X$ of dimension at least $c(N,\epsilon)$, there is a linear embedding $E_N:l^N_2\to X$ so that
\begin{equation}\label{E_n}
(1+\epsilon)^{-1}\biggl(\sum_{n=1}^N|a_n|^2\biggr)^{1/2}\leq\biggl\|\sum_{n=1}^Na_nE_Ne_n\biggr\|_X
\leq\biggl(\sum_{n=1}^N|a_n|^2\biggr)^{1/2}
\end{equation}
for any $a_1,\dots,a_N\in\C$. Here $(e_1,\dots,e_N)$ is some fixed orthonormal basis of $l^N_2$.
\end{theorem}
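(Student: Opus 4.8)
Since the statement is the classical theorem of Dvoretzky on almost-Euclidean sections, the honest plan is: quote it (as the paper does, from the text cited above) and, if a self-contained argument were wanted, run Milman's concentration-of-measure proof. The only content genuinely needed \emph{here} beyond a bare almost-Euclidean embedding is the cosmetic two-sided normalization in \eqref{E_n}. So I would first reduce to the following: whenever $\dim X\ge c(N,\epsilon)$ there is an $N$-dimensional subspace $E\subseteq X$ and a linear isomorphism $u:l^N_2\to E$ with $\|u\|\,\|u^{-1}\|\le 1+\epsilon$. Given such a $u$, replace it by $u/\|u\|$, so that now $\|u\|=1$ (this gives the right-hand inequality in \eqref{E_n}) while $\|u^{-1}\|\le 1+\epsilon$ (equivalently $\|ua\|_X\ge(1+\epsilon)^{-1}\|a\|_2$ for all $a$, which is the left-hand inequality), and set $E_N:=u$. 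Thus everything comes down to producing one $(1+\epsilon)$-isomorphic copy of $l_2^N$ inside $X$ once the dimension is large enough.

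For that I would use Milman's proof. Fix an $n$-dimensional subspace $Y\subseteq X$ with $n$ to be chosen. Put $Y$ in John position: choose a Euclidean structure $|\cdot|$ on $Y$ whose unit ball is the maximal-volume ellipsoid inside the $X$-unit ball of $Y$, so that $|y|/\sqrt n\le\|y\|_X\le|y|$ for all $y\in Y$. Then $f(y):=\|y\|_X$ is $1$-Lipschitz on the Euclidean sphere $S=S(Y)$; let $M:=\int_S f\,d\mathrm{Haar}$ be its mean. By Lévy's spherical isoperimetric inequality, $f$ concentrates sharply around $M$: $\mathrm{Haar}\bigl(|f-M|>t\bigr)\le C\exp(-cnt^2)$. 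Now take a $\delta$-net of the Euclidean unit sphere of a fixed $k$-dimensional subspace $F\subseteq Y$ (cardinality $\le(3/\delta)^k$); applying the concentration estimate to each net point for a Haar-random rotation $g$ of $Y$, then a union bound, and finally approximating arbitrary $y\in gF$ by net points, one gets that with probability close to $1$ the rotated subspace $gF$ satisfies $(1-\epsilon)M\,|y|\le\|y\|_X\le(1+\epsilon)M\,|y|$ for all $y\in gF$, provided $k\le c\,\epsilon^2 M^2 n$. In particular such an $\epsilon$-Euclidean subspace of dimension $k$ exists; rescaling as in the previous paragraph yields the normalized embedding.

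The one real obstacle is forcing $k$ up to $N$, i.e. obtaining a \emph{lower} bound on $M$. The trivial bound $M\ge 1/\sqrt n$ coming from John position only gives $k\gtrsim\epsilon^2$, which is useless. The correct estimate is $M\gtrsim\sqrt{(\log n)/n}$ (sharp, e.g. for $l_\infty^n$), which requires the Dvoretzky--Rogers lemma: in John position one can extract $\sim n$ vectors of $X$-norm $1$ that are almost orthonormal, and comparing $M\sqrt n\asymp\mathbb{E}\|g\|_X$ (for a standard Gaussian vector $g$ in $(Y,|\cdot|)$) against the maximum over those vectors yields $\mathbb{E}\|g\|_X\gtrsim\sqrt{\log n}$. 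This gives $k\gtrsim\epsilon^2\log n$, so it suffices to take $n$, hence $\dim X$, at least $c(N,\epsilon):=\exp(CN/\epsilon^2)$; since $X$ is infinite-dimensional in the application, any such $n$ is available and the theorem follows. This Dvoretzky--Rogers step, not the concentration step, is the technical heart of the argument.
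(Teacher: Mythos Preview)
Your proposal is correct. The paper does not prove this theorem at all; it merely cites it from \cite{DJT}, Chapter 19, and you explicitly acknowledge this at the outset. The additional sketch you provide of Milman's concentration-of-measure proof is accurate in all its main points: the normalization reduction to produce the two-sided inequality \eqref{E_n}, the use of John position, L\'{e}vy concentration combined with a net argument on a random subspace, and the identification of the Dvoretzky--Rogers lower bound $M\gtrsim\sqrt{(\log n)/n}$ as the crucial quantitative input. One small remark: the theorem as stated covers finite-dimensional $X$ with $\dim X\ge c(N,\epsilon)$, so rather than invoking infinite-dimensionality you should simply take $Y$ to be any $c(N,\epsilon)$-dimensional subspace of $X$; your argument already accommodates this.
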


Recall that a sequence $\{\lambda_n\}_{n\geq1}$ of complex numbers is completely multiplicative if $\lambda_{mn}=\lambda_m\lambda_n$ for all $m,n\in\mathbb{N}$. For $1\leq p,q<\infty$, a sequence $\{\lambda_n\}_{n\geq1}$ is said to be a multiplier from $\h_p(\C)$ to $\h_q(\C)$ if $\sum_{n=1}^{\infty}\lambda_na_nn^{-s}\in\h_q(\C)$ for each $f(s)=\sum_{n=1}^{\infty}a_nn^{-s}$ in $\h_p(\C)$. Note that by the closed graph theorem, if $\{\lambda_n\}_{n\geq1}$ is a multiplier from $\h_p(\C)$ to $\h_q(\C)$, then there exists $C>0$ such that for any $f(s)=\sum_{n=1}^{\infty}a_nn^{-s}$ in $\h_p(\C)$,
$$\left\|\sum_{n=1}^{\infty}\lambda_na_nn^{-s}\right\|_{\h_q(\C)}\leq C\left\|\sum_{n=1}^{\infty}a_nn^{-s}\right\|_{\h_p(\C)}.$$
The following lemma is due to Bayart \cite{Ba02T}.

\begin{lemma}\label{multi}
Let $1\leq p\leq q<\infty$ and $\{\lambda_n\}_{n\geq1}$ be a completely multiplicative sequence such that $\lambda_{\mathfrak{p}_j}\leq\sqrt{p/q}$ for large $j$. Then $\{\lambda_n\}_{n\geq1}$ is a multiplier from $\h_p(\C)$ to $\h_q(\C)$.
\end{lemma}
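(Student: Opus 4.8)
The plan is to prove the multiplier bound on Dirichlet polynomials and then pass to the limit. By consequence (ii) above, $\mathcal{P}(\C)$ is dense in $\h_p(\C)$, so it suffices to find $C>0$ with $\|\sum_n\lambda_na_nn^{-s}\|_{\h_q(\C)}\le C\|\sum_na_nn^{-s}\|_{\h_p(\C)}$ for every Dirichlet polynomial: granted this, if $P_N\to f$ in $\h_p(\C)$ then $\{\sum_n\lambda_nc_n(P_N)n^{-s}\}_N$ is Cauchy in $\h_q(\C)$ and, by consequence (i), converges to the Dirichlet series with coefficients $\lambda_nc_n(f)$. Via the Bohr transform and \eqref{polynomial}, a polynomial $P(s)=\sum_{n=1}^Na_nn^{-s}$ corresponds to an analytic polynomial $\B^{-1}(P)$ on $\T^J$ (with $\mathfrak{p}_J$ the largest prime dividing some $n\le N$), with $\|P\|_{\h_r(\C)}=\|\B^{-1}(P)\|_{L_r(\T^J)}$, and multiplying the coefficients by a completely multiplicative $\{\lambda_n\}$ corresponds to the operator $V_\lambda F(z_1,\dots,z_J):=F(\lambda_{\mathfrak{p}_1}z_1,\dots,\lambda_{\mathfrak{p}_J}z_J)$. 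So it is enough to show $\|V_\lambda F\|_{L_q(\T^n)}\le C\|F\|_{L_p(\T^n)}$ for all analytic polynomials $F$ on $\T^n$. Absorbing the arguments of the $\lambda_{\mathfrak{p}_j}$ into a rotation of $\T^n$ (an isometry of each $L_r(\T^n)$), I may assume $\lambda_{\mathfrak{p}_j}\ge0$; and since the bound $\lambda_{\mathfrak{p}_j}\le\sqrt{p/q}$ is needed for every $j$ (otherwise $V_\lambda$ restricts to the identity on the series supported on $\{\mathfrak{p}_j^k\}_{k\ge0}$, where the $\h_q(\C)$- and $\h_p(\C)$-norms are not comparable), I use it in the form $0\le\lambda_{\mathfrak{p}_j}\le\sqrt{p/q}$ for all $j$, and will in fact obtain $C=1$.

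The crux is a dimension-free, $L_p$-valued version of Weissler's hypercontractive inequality on the disk: \emph{for every measure space $(\Omega,\mu)$, every $n\ge1$, all $r_1,\dots,r_n\in[0,\sqrt{p/q}]$, and every $L_p(\mu)$-valued analytic polynomial $F$ on $\T^n$,}
$$\Big(\int_{\T^n}\|F(r_1z_1,\dots,r_nz_n)\|_{L_p(\mu)}^q\,dm_n(z)\Big)^{1/q}\le\Big(\int_{\T^n}\|F(z)\|_{L_p(\mu)}^p\,dm_n(z)\Big)^{1/p}.$$
I would prove this by induction on $n$. For $n=1$, writing $F(z)(\omega)=F(z,\omega)$, the left-hand side equals $\|g\|_{L_{q/p}(\T)}^{q/p}$ with $g(z):=\int_\Omega|F(r_1z,\omega)|^p\,d\mu(\omega)$; since $q/p\ge1$, Minkowski's integral inequality gives $\|g\|_{L_{q/p}(\T)}\le\int_\Omega(\int_\T|F(r_1z,\omega)|^q\,dm(z))^{p/q}\,d\mu(\omega)$, and the classical one-dimensional Weissler inequality — applicable because $r_1\le\sqrt{p/q}$ — bounds the inner integral over $\T$ by $(\int_\T|F(z,\omega)|^p\,dm(z))^{q/p}$; hence $\|g\|_{L_{q/p}(\T)}\le\|F\|_{L_p(\T\times\Omega)}^p$, which is the case $n=1$. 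For the inductive step I peel off the last variable: for fixed $z_1,\dots,z_{n-1}$, the case $n=1$ (with factor $r_n$) applied to the polynomial $w\mapsto F(r_1z_1,\dots,r_{n-1}z_{n-1},w)$ gives
$$\int_{\T^n}\|F(r_1z_1,\dots,r_nz_n)\|_{L_p(\mu)}^q\,dm_n\le\int_{\T^{n-1}}\Big(\int_\T\|F(r_1z_1,\dots,r_{n-1}z_{n-1},z_n)\|_{L_p(\mu)}^p\,dm(z_n)\Big)^{q/p}dm_{n-1},$$
and the inner integral equals $\|\widetilde F(r_1z_1,\dots,r_{n-1}z_{n-1})\|_{L_p(m\times\mu)}^p$, where $\widetilde F$ is the $L_p(m\times\mu)$-valued analytic polynomial on $\T^{n-1}$ defined by $\widetilde F(z')(z_n,\omega):=F(z',z_n,\omega)$; the inductive hypothesis (in $n-1$ variables, same factors $r_1,\dots,r_{n-1}$, values in $L_p(m\times\mu)$) then closes the induction.

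Granting this, the scalar case ($\mu$ a point), $n=J$, $r_j=\lambda_{\mathfrak{p}_j}$, yields $\|V_\lambda F\|_{L_q(\T^J)}\le\|F\|_{L_p(\T^J)}$, and the reductions of the first paragraph complete the proof. I expect the only real obstacle to be the dimension-freeness of the constant in the multivariate inequality: a naive iteration of Weissler's theorem "one variable at a time" through intermediate exponents would spoil the constant, so the induction must instead be routed through the $L_p$-valued one-variable case — which is exactly what the Minkowski-integral-inequality step, together with the isometric identification $L_p(\T^{n-1},L_p(\mu))=L_p(\T^{n-1}\times\Omega)$ that lets the $(n-1)$-variable hypothesis be applied to $\widetilde F$, achieves. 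Everything else — the density argument, the coefficientwise continuity, and the phase reduction — is routine.
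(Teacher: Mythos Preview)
The paper does not supply its own proof of this lemma; it simply attributes the result to Bayart's thesis \cite{Ba02T}. Your argument is correct and is in fact Bayart's original approach: transfer via the Bohr lift to a dilation operator on the polytorus and iterate Weissler's one-variable hypercontractive estimate $\|f(r\,\cdot)\|_{H_q(\T)}\le\|f\|_{H_p(\T)}$ (valid precisely for $0\le r\le\sqrt{p/q}$), routing the induction through the $L_p(\mu)$-valued one-variable case by means of Minkowski's integral inequality so that the constant stays equal to $1$ independently of the number of variables. The identification $L_p(\T^{n-1},L_p(\T\times\Omega))\cong L_p(\T^n\times\Omega)$ that lets the inductive hypothesis be applied to $\widetilde F$ is exactly the right device here.

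You are also right to flag the hypothesis. Read literally, ``$\lambda_{\mathfrak{p}_j}\le\sqrt{p/q}$ for large $j$'' is not enough: with $\lambda_{\mathfrak{p}_1}=1$ the multiplier restricted to series supported on $\{2^k\}_{k\ge0}$ is (after a rotation) the identity, which does not map $\h_p(\C)$ into $\h_q(\C)$ when $p<q$. The statement one actually proves---and the only case invoked in the proof of Theorem~\ref{main}, where $\lambda_{\mathfrak{p}_j}=\sqrt{2/p}$ for every $j$---is with $|\lambda_{\mathfrak{p}_j}|\le\sqrt{p/q}$ for all $j$, as you assume.
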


We also need the following lemma.

\begin{lemma}\label{B-times}
Let $1\leq p<\infty$. Suppose that $f(s)=\sum_{n=1}^Nx_nn^{-s}$ belongs to $\mathcal{P}(X)$, and $g(s)=\sum_{n=1}^{\infty}b_nn^{-s}$ belongs to $\h_p(\C)$. Then $fg\in\h_p(X)$ and $\B^{-1}(fg)=\B^{-1}(f)\B^{-1}(g)$.
\end{lemma}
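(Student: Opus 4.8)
The plan is to transfer the problem to the infinite polytorus via the Bohr transform and exploit that $\B^{-1}(f)$ is a bounded polynomial, so that multiplying by it creates no integrability or spectral difficulties. First I would set $F:=\B^{-1}(f)$ and $G:=\B^{-1}(g)$. By \eqref{polynomial}, $F(z)=\sum_{n=1}^N x_n z^{\nu(n)}$ is an $X$-valued trigonometric polynomial with Fourier spectrum contained in $\mathbb{N}_0^{(\infty)}$, hence $F\in L_\infty(\TT,X)$, while $G\in H_p(\TT,\C)$. Since
$$\|FG\|_{L_p(\TT,X)}\le\|F\|_{L_\infty(\TT,X)}\,\|G\|_{L_p(\TT,\C)}<\infty,$$
the product $FG$ belongs to $L_p(\TT,X)$. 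Because $F$ is a finite sum one may interchange sum and integral in the definition of the Fourier coefficients, so that for every $\nu\in\mathbb{Z}^{(\infty)}$,
$$\widehat{FG}(\nu)=\sum_{n=1}^N x_n\,\widehat{G}\big(\nu-\nu(n)\big).$$
As $\widehat{G}$ is supported in $\mathbb{N}_0^{(\infty)}$ and each $\nu(n)\ge0$ coordinatewise, this expression vanishes whenever $\nu$ has a negative coordinate; hence $FG\in H_p(\TT,X)$ and therefore $\B(FG)\in\h_p(X)$.

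Next I would identify $\B(FG)$ with $fg$ by matching Dirichlet coefficients. On the one hand, $fg$ is legitimately an element of $\D(X)$ --- it is the finite sum $\sum_{n=1}^N x_n n^{-s}g(s)$ --- and expanding gives $c_k(fg)=\sum_{n\mid k,\ n\le N}x_nb_{k/n}$ for every $k\ge1$. On the other hand, using $\widehat{G}(\nu(m))=b_m$ together with $\mathfrak{p}^{\nu(n)}\mathfrak{p}^{\nu(m)}=\mathfrak{p}^{\nu(n)+\nu(m)}$ and unique factorization, the term $\widehat{G}(\nu(k)-\nu(n))$ is nonzero only when $\nu(n)\le\nu(k)$ coordinatewise, i.e.\ when $n\mid k$, in which case it equals $b_{k/n}$; thus the $k$th Dirichlet coefficient of $\B(FG)$ is $\widehat{FG}(\nu(k))=\sum_{n\mid k,\ n\le N}x_nb_{k/n}$. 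The two coefficient sequences agree, so $\B(FG)=fg$. Consequently $fg\in\h_p(X)$ and $\B^{-1}(fg)=FG=\B^{-1}(f)\B^{-1}(g)$.

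There is no real obstacle here: the statement is essentially a bookkeeping identity, and the only points requiring a word of justification are the convolution formula for $\widehat{FG}(\nu)$ and the interchange of summation and integration, both of which are immediate because $F=\B^{-1}(f)$ is a \emph{finite} sum --- so, in contrast to arguments needing density of polynomials, no approximation of $g$ is required (one could alternatively approximate $g$ in $\h_p(\C)$ by Dirichlet polynomials and appeal to continuity of multiplication by the fixed polynomial $f$).
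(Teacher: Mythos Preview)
Your proposal is correct and follows essentially the same route as the paper: set $F=\B^{-1}(f)$, $G=\B^{-1}(g)$, use that $F$ is a finite sum to obtain the convolution formula $\widehat{FG}(\nu)=\sum_{n=1}^N x_n\,\widehat{G}(\nu-\nu(n))$, and then match these Fourier coefficients with the Dirichlet coefficients $c_k(fg)=\sum_{n\mid k,\,n\le N}x_n b_{k/n}$. The only cosmetic difference is that you spell out why $FG\in H_p(\TT,X)$ via the $L_\infty$--$L_p$ bound and the vanishing of the negative Fourier coefficients, whereas the paper simply declares this step ``clear''.
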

\begin{proof}
Write $F=\B^{-1}(f)$ and $G=\B^{-1}(g)$. Then it is clear that $FG\in H_p(\TT,X)$. Moreover,
$$F(z)=\sum_{n=1}^Nx_nz^{\nu(n)},\quad z\in\TT,$$
and for any $n\in\mathbb{N}$, $\widehat{G}\big(\nu(n)\big)=b_n$. We now calculate the $\nu$th Fourier coefficient of $FG$ for $\nu\in\mathbb{N}_0^{(\infty)}$. Bearing in mind that $G\in H_p(\TT,X)$, we have
\begin{align*}
\widehat{FG}(\nu)&=\int_{\TT}(FG)(z)z^{-\nu}dm_{\infty}(z)\\
&=\int_{\TT}F(z)G(z)z^{-\nu}dm_{\infty}(z)\\
&=\sum_{n=1}^{N}x_n\int_{\TT}G(z)z^{-(\nu-\nu(n))}dm_{\infty}(z)\\
&=\sum_{\substack{n|\mathfrak{p}^{\nu}\\1\leq n\leq N}}x_n\widehat{G}\big(\nu-\nu(n)\big).
\end{align*}
On the other hand, it is easy to see that the $l$th Dirichlet coefficient $c_l(fg)$ of $fg$ is given by
$$c_l(fg)=\sum_{\substack{n|l\\1\leq n\leq N}}x_nb_{l/n}.$$
Therefore, $\widehat{FG}\big(\nu(l)\big)=c_l(fg)$ for any $l\in\mathbb{N}$, which implies that $\B(FG)=fg$ and $fg\in\h_p(X)$.
\end{proof}

We are now in a position to prove Theorem \ref{main}.

\begin{proof}[Proof of Theorem \ref{main}]
Suppose that $T_g:\h_p^{\text{weak}}(X)\to\h_p^+(X)$ is bounded. Then it is easy to see that $g\in\h_p(\C)$. Consequently, by Proposition \ref{deri}, $g'_{\sigma}\in\h_p(\C)$ for any $\sigma>0$. We will complete the proof by showing $\|g'_{1/2}\|_{\h_p(\C)}=0$.

Let $N\in\N$ and $\epsilon>0$. According to Theorem \ref{Dvo}, we may fix a linear embedding $E_N:l^N_2\to X$ such that (\ref{E_n}) holds. Put $x^{(N)}_n=E_Ne_n$ for $n=1,2,\dots,N$, and let $\lambda_n=\left(2p^{-1}\right)^{\frac{\Omega(n)}{2}}$ for $n\geq1$, where $\Omega(n)$ is the number of prime factors of $n$, counted with multiplicity. Define the $X$-valued Dirichlet polynomial $f_N$ by
$$f_N(s)=\sum_{n=1}^N\lambda_nx^{(N)}_nn^{-s}=E_N\left(\sum_{n=1}^N\lambda_ne_nn^{-s}\right).$$
Then by Lemma \ref{multi},
\begin{align*}
\|f\|_{\h^{\text{weak}}_p(X)}&=\sup_{x^*\in B_{X^*}}\|x^*\circ f_N\|_{\h_p(\C)}\\
&=\sup_{x^*\in B_{X^*}}\left\|\sum_{n=1}^N\lambda_nx^*\left(x^{(N)}_n\right)n^{-s}\right\|_{\h_p(\C)}\\
&\lesssim\sup_{x^*\in B_{X^*}}\left\|\sum_{n=1}^Nx^*\left(x^{(N)}_n\right)n^{-s}\right\|_{\h_2(\C)}\\
&=\sup_{x^*\in B_{X^*}}\left(\sum_{n=1}^N\left|E^*_Nx^*(e_n)\right|^2\right)^{1/2}\leq1.
\end{align*}
Hence it follows from Theorem \ref{L-P} and Lemma \ref{B-times} that
\begin{align*}
\|T_g\|^p
&\gtrsim\|T_gf_N\|^p_{\h^+_p(X)}\\
&\gtrsim\int_0^{+\infty}\|(f_N)_{\sigma}g'_{\sigma}\|^p_{\h_p(X)}\sigma^{p-1}d\sigma\\
&=\int_0^{+\infty}\left\|\B^{-1}\Big((f_N)_{\sigma}g'_{\sigma}\Big)\right\|^p_{H_p(\TT,X)}\sigma^{p-1}d\sigma\\
&=\int_0^{+\infty}\left\|\B^{-1}\Big((f_N)_{\sigma}\Big)\B^{-1}(g'_{\sigma})\right\|^p_{H_p(\TT,X)}\sigma^{p-1}d\sigma\\
&=\int_0^{+\infty}\sigma^{p-1}\int_{\TT}\left\|\B^{-1}\Big((f_N)_{\sigma}\Big)(z)\right\|^p_X|\B^{-1}(g'_{\sigma})(z)|^pdm_{\infty}(z)d\sigma.
\end{align*}
Note that for any $z\in\TT$, it follows from \eqref{E_n} that
\begin{align*}
\left\|\B^{-1}\Big((f_N)_{\sigma}\Big)(z)\right\|_X&=\left\|\sum_{n=1}^{N}\lambda_nn^{-\sigma}z^{\nu(n)}E_Ne_n\right\|_X\\
&\gtrsim\left(\sum_{n=1}^N(2p^{-1})^{\Omega(n)}n^{-2\sigma}\right)^{1/2}\\
&\gtrsim\left(\sum_{1\leq j\leq\pi(N)}\mathfrak{p}_j^{-2\sigma}\right)^{1/2},
\end{align*}
where $\pi(N)$ denotes the number of primes less than or equal to $N$. Therefore, we may apply Lemma \ref{decr} to obtain that
\begin{align*}
\|T_g\|^p
&\gtrsim\int_0^{+\infty}\left(\sum_{1\leq j\leq\pi(N)}\mathfrak{p}_j^{-2\sigma}\right)^{p/2}\sigma^{p-1}
  \int_{\TT}|\B^{-1}(g'_{\sigma})(z)|^pdm_{\infty}(z)d\sigma\\
&\geq\int_0^{1/2}\|g'_{\sigma}\|^p_{\h_p(\C)}\left(\sum_{1\leq j\leq\pi(N)}\mathfrak{p}_j^{-2\sigma}\right)^{p/2}\sigma^{p-1}d\sigma\\
&\gtrsim\|g'_{1/2}\|^p_{\h_p(\C)}\left(\sum_{1\leq j\leq\pi(N)}\mathfrak{p}_j^{-1}\right)^{p/2}.
\end{align*}
Since $N\in\mathbb{N}$ is arbitrary and $\sum_{j=1}^{\infty}\mathfrak{p}_j^{-1}=\infty$, we conclude that $\|g'_{1/2}\|_{\h_p(\C)}=0$, which finishes the proof.
\end{proof}

\section{Concluding remarks}\label{remark}

In this section, we give some remarks regarding the Littlewood--Paley inequalities of Dirichlet series. We begin from two generalizations of Theorem \ref{L-P}. The following corollary concerns $f\in\h^+_q(X)$ with $p$-uniformly PL-convex space $X$, where $p\geq\max\{2,q\}$.

\begin{corollary}
Let $2\leq p<\infty$, $1\leq q\leq p$, and let $X$ be a $p$-uniformly PL-convex space. Then there exists $C>0$ such that for any $f\in\h^+_q(X)$,
$$\|f(+\infty)\|_X+\left(\int_0^{+\infty}\|f'_{\sigma}\|^p_{\h_q(X)}\sigma^{p-1}d\sigma\right)^{1/p}\leq C\|f\|_{\h^+_q(X)}.$$
\end{corollary}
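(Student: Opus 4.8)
The plan is to mimic the proof of Theorem \ref{L-P} but keep track of the fact that the inner norm is now the $\h_q$-norm while the outer PL-convexity exponent is $p\geq q$. First I would note that the proof of Theorem \ref{L-P} really splits into two stages: (a) a "local" Littlewood--Paley estimate on $\h_q$ proved by pulling back the disk inequality \eqref{LiPa} via the Cayley transform and integrating over $\TT$, and (b) a "globalization" step that upgrades the estimate for $f\in\h_q$ to one for $f\in\h^+_q$ by translating, applying stage (a) to $f_\delta$, and letting $\delta\to0$ using Lemma \ref{decr} and monotone convergence. Stage (b) is insensitive to the exponents involved and carries over verbatim, so the real content is stage (a): I need to show that there is $C>0$ with
$$\|g(+\infty)\|_X^p+\int_0^{+\infty}\|g'_\sigma\|^p_{\h_q(X)}\sigma^{p-1}\,d\sigma\leq C\|g\|^p_{\h_q(X)}$$
for every $g\in\h_q(X)$ (equivalently, for every Dirichlet polynomial, by density).

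For stage (a), fix a Dirichlet polynomial $P$ and $w\in\TT$, and apply \eqref{LiPa} to $P_w\circ\phi_\eta$ exactly as in the proof of Theorem \ref{L-P}. The Cayley change of variables is identical and yields
$$\|P_w(\eta)\|_X^p+C\int_0^{+\infty}\!\!\int_{\R}\|P_w'(\sigma+it)\|_X^p\,\frac{\sigma^{p-1}\eta}{(\sigma+\eta)^2+t^2}\,dt\,d\sigma\leq\|P_w\circ\phi_\eta\|^p_{H_p(\DD,X)}.$$
The only genuinely new step is the passage from this pointwise-in-$w$ inequality to an inequality in terms of $\h_q$-norms. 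On the left-hand side I integrate over $w\in\TT$ and use Lemma \ref{ww}: $\int_\TT\|P_w(s)\|_X^p\,dm_\infty(w)$ is $\|P_\sigma\|^p_{\h_p(X)}$, which is \emph{not} what I want since I want $\|P_\sigma\|^p_{\h_q(X)}$. Here is where $q\leq p$ enters: by Lemma \ref{decr} applied in $\h_q$ and a Minkowski/Jensen-type comparison, or more directly by the fact that for a fixed Dirichlet polynomial the map $w\mapsto P_w(\sigma+it)$ has, when integrated suitably over $\TT$, $L^p$-average controlling the $\h_q$-norm is the wrong direction; instead I should integrate the \emph{right}-hand side first. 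On the right, $\int_\TT\|P_w\circ\phi_\eta\|^p_{H_p(\DD,X)}\,dm_\infty(w)$ equals, via Fubini and Lemma \ref{ww} again, $\int_{\T}\|P(\,\cdot\,+\Re\phi_\eta(r\zeta))\|^p_{\h_p(X)}$ in the limit, which is $\leq\|P\|^p_{\h_p(X)}$. So the cleanest route is: run the entire argument of Theorem \ref{L-P} verbatim to get the inequality with $\h_p$-norms throughout, obtaining $\h^+_p(X)\subset\mathscr{D}^p_{p-1}(X)$, and then separately observe the trivial inclusions $\h^+_q(X)\subset\h^+_p(X)$ is false for $q<p$ --- so this does not work either.

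The correct and simplest approach is therefore to prove stage (a) directly in $\h_q$ by applying \eqref{LiPa} not to $X$-valued functions but combined with the scalar device already used in the proof of Theorem \ref{L-P}: apply \eqref{LiPa} to $P_w\circ\phi_\eta$ with the $X$-norm, take norms to the $p$-th power, integrate $dm_\infty(w)$, and then use the elementary fact that for any $Y$-valued Dirichlet polynomial $Q$ and $q\leq p$ one has $\|Q_\sigma\|_{\h_q(Y)}\leq\|Q_\sigma\|_{\h_p(Y)}$ (Hölder on $\TT$ via \eqref{polynomial}) --- but applied in the reverse inclusion $\h_q\supset\h_p$ as sets of \emph{coefficients}, combined with the multiplier Lemma \ref{multi}: the completely multiplicative symbol $\lambda_{\mathfrak{p}_j}=\sqrt{q/p}$ gives a bounded multiplier $\h_q(\C)\to\h_p(\C)$, hence (tensoring with the identity on $X$, which is legitimate for polynomials by Lemma \ref{B-times}) a bounded map $\h_q(X)\to\h_p(X)$ on polynomials with a fixed norm $M=M(p,q)$. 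Composing: given $f\in\h^+_q(X)$, its dilated polynomials $f_\delta$ have $\|(f_\delta)_\sigma\|_{\h_q(X)}$ controlled, and the multiplied series $\widetilde{f_\delta}$ lies in $\h_p(X)$ with $\|\widetilde{f_\delta}\|_{\h_p(X)}\leq M\|f_\delta\|_{\h_q(X)}$; apply Theorem \ref{L-P} to $\widetilde{f_\delta}$; then undo the multiplier on the derivative side, where the symbol $\lambda_n$ now only helps (the factor $n^{-\sigma}$ makes the primes' contribution summable). The main obstacle I anticipate is precisely this bookkeeping: making sure the multiplier argument is applied to $f'_\sigma$ rather than $f$ (since $(\lambda_n)$ is not multiplicative-friendly with the $\log n$ weight of differentiation), and verifying the interchange of the multiplier with translation and differentiation at the level of Dirichlet coefficients; once that is set up, the integral $\int_0^\infty\|f'_\sigma\|^p_{\h_q(X)}\sigma^{p-1}\,d\sigma\lesssim\int_0^\infty\|\widetilde{f'_\sigma}\|^p_{\h_p(X)}\sigma^{p-1}\,d\sigma\lesssim\|f\|^p_{\h^+_q(X)}$ follows by Theorem \ref{L-P} together with Lemma \ref{decr} and the $\delta\to0$ monotone-convergence step copied from the proof of Theorem \ref{L-P}.
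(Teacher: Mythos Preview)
Your proposal meanders through several approaches that you yourself recognize as dead ends, and the final multiplier-based route has a genuine gap. The crux of your plan is to transport the problem from $\h_q(X)$ to $\h_p(X)$ via the completely multiplicative multiplier $\lambda_{\mathfrak p_j}=\sqrt{q/p}$, invoke Theorem~\ref{L-P} there, and transport back. But Lemma~\ref{multi} is stated and proved for \emph{scalar}-valued Dirichlet series; your justification ``tensoring with the identity on $X$, which is legitimate for polynomials by Lemma~\ref{B-times}'' does not supply a bound. Lemma~\ref{B-times} only identifies Bohr transforms of products; it says nothing about operator norms. Whether the Bayart multipliers extend with uniform constants to $\h_q(X)\to\h_p(X)$ for an arbitrary $p$-uniformly PL-convex $X$ is a nontrivial question (vector-valued extensions of hypercontractive/Fourier-multiplier inequalities typically require extra geometric hypotheses such as UMD), and you would need to prove it. The ``undoing'' step is equally unsubstantiated in the vector-valued setting. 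So as written the argument does not close.

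The paper's proof avoids all of this by a single structural observation you missed: since $X$ is $p$-uniformly PL-convex and $q\le p$, the space $L_q(\TT,X)$, and hence $\h_q(X)$, is itself $p$-uniformly PL-convex (this is \cite[Theorem 4.1]{DGT}). One then applies Theorem~\ref{L-P} \emph{with the Banach space $\h_q(X)$ in place of $X$} to the $\h_q(X)$-valued Dirichlet series $h(u)=P_u$, i.e.\ $h(u)(s)=\sum x_n n^{-s}n^{-u}$. Rotation invariance of $m_\infty$ gives $\|h\|_{\h_p(\h_q(X))}=\|P\|_{\h_q(X)}$ and $\|h'_\sigma\|_{\h_p(\h_q(X))}=\|P'_\sigma\|_{\h_q(X)}$, so the desired inequality for polynomials drops out immediately; your stage~(b) globalization then finishes the job. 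No multipliers, no comparison of $\h_p$ and $\h_q$ norms, no vector-valued extension issues.
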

\begin{proof}
Let $P(s)=\sum_{n=1}^{N}x_nn^{-s}$ belong to $\mathcal{P}(X)$. Define
$$h(u)(s):=P_u(s)=\sum_{n=1}^{N}x_nn^{-s}n^{-u},\quad u\in\C_0.$$
Then by \cite[Theorem 2.1]{DP}, \eqref{polynomial} and the rotation invariance of $m_{\infty}$,
\begin{align*}
\|h\|^p_{\h^+_p(\h_q(X))}
&=\|h\|^p_{\h_p(\h_q(X))}\\
&=\int_{\TT}\left\|\sum_{n=1}^{N}(x_nn^{-s})z^{\nu(n)}\right\|^p_{\h_q(X)}dm_{\infty}(z)\\
&=\int_{\TT}\left(\int_{\TT}\left\|\sum_{n=1}^{N}x_nz^{\nu(n)}w^{\nu(n)}\right\|^q_X
  dm_{\infty}(w)\right)^{p/q}dm_{\infty}(z)\\
&=\left(\int_{\TT}\left\|\sum_{n=1}^{N}x_nw^{\nu(n)}\right\|^q_Xdm_{\infty}(w)\right)^{p/q}\\
&=\|P\|^p_{\h_q(X)}.
\end{align*}
It follows from \cite[Theorem 4.1]{DGT} that $L_q(\TT,X)$ is $p$-uniformly PL-convex, which implies that $\h_q(X)$ is $p$-uniformly PL-convex. Therefore, we may apply Theorem \ref{L-P} to obtain that
\begin{align*}
\|h(+\infty)\|_{\h_q(X)}+\left(\int_0^{+\infty}\|h'_{\sigma}\|^p_{\h_p(\h_q(X))}\sigma^{p-1}d\sigma\right)^{1/p}\\
\leq C\|h\|_{\h^+_p(\h_q(X))}=C\|P\|_{\h_q(X)}.
\end{align*}
It is clear that $\|h(+\infty)\|_{\h_q(X)}=\|f(+\infty)\|_{X}$. Moreover, the rotation invariance gives that $\|h'_{\sigma}\|^p_{\h_p(\h_q(X))}=\|P'_{\sigma}\|^p_{\h_q(X)}$ for any $\sigma>0$ as before. Consequently,
$$\|P(+\infty)\|_{X}+\left(\int_0^{+\infty}\|P'_{\sigma}\|^p_{\h_q(X)}\sigma^{p-1}d\sigma\right)^{1/p}\leq C\|P\|_{\h_q(X)}.$$
Arguing as in the proof of Theorem \ref{L-P}, we can establish the desired result.
\end{proof}

The following theorem concerns $f\in\h^+_q(X)$ with $p$-uniformly PL-convex space $X$, where $2\leq p\leq q$.

\begin{theorem}
Let $2\leq p\leq q<\infty$, and let $X$ be $p$-uniformly PL-convex. Then there exists $C>0$ such that for any $f\in\h^+_q(X)$,
\begin{align*}
\|f(+\infty)\|_X+\left(\int_0^{+\infty}\int_{\TT}\left\|\B^{-1}(f'_{\sigma})(z)\right\|^p_X
\left\|\B^{-1}(f_{\sigma})(z)\right\|^{q-p}_Xdm_{\infty}(z)\sigma^{p-1}d\sigma\right)^{1/q}\\
\leq C\|f\|_{\h^+_q(X)}.
\end{align*}
\end{theorem}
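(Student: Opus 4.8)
The plan is to mimic the proof of Theorem \ref{L-P}, but instead of starting from the scalar-type Littlewood--Paley inequality \eqref{LiPa} for $H_p(\DD,X)$, to use a \emph{mixed-norm} Littlewood--Paley inequality adapted to $H_q(\DD,X)$ with the $p$-uniformly PL-convex space $X$. More precisely, the key analytic input I would establish (or invoke, if available in \cite{BP,DGT}) is that for $2\leq p\leq q<\infty$ and $X$ that is $p$-uniformly PL-convex, there exists $C>0$ such that for every $f\in H_q(\DD,X)$,
\begin{equation*}
\|f\|_{H_q(\DD,X)}^q\geq\|f(0)\|_X^q+C\int_{\DD}\|f'(\xi)\|_X^p\|f(\xi)\|_X^{q-p}(1-|\xi|^2)^{p-1}dA(\xi).
\end{equation*}
This is the natural "$H_q$" version of \eqref{LiPa}; when $q=p$ it reduces to \eqref{LiPa}, and its proof follows the same scheme as in \cite{BP}, using the submean-value property of $\|f\|_X^q$ together with the differential inequality coming from the $p$-uniform PL-convexity estimate $\int_\T\|x+\xi y\|_X^p\,dm(\xi)\geq\|x\|_X^p+C\|y\|_X^p$ (applied after raising to a suitable power and using Hölder to pass from exponent $p$ to exponent $q$).

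Granting that inequality, I would run the Cayley-transform argument exactly as in the proof of Theorem \ref{L-P}. Fix $\eta>0$, take $P\in\mathcal{P}(X)$ and $w\in\TT$, and apply the $H_q$-Littlewood--Paley inequality to $P_w\circ\phi_\eta$. The change of variables $s=\phi_\eta(\xi)$ is identical to before: the factor $|\phi_\eta'(\xi)|^p(1-|\xi|^2)^{p-1}$ transforms to (a constant times) $\dfrac{\sigma^{p-1}\eta}{(\sigma+\eta)^2+t^2}$, while the extra factor $\|P_w(\phi_\eta(\xi))\|_X^{q-p}$ just rides along since it depends on $P_w$ evaluated at the same point. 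Thus one obtains
\begin{equation*}
\|P_w(\eta)\|_X^q+C\int_0^{+\infty}\int_\R\|P_w'(\sigma+it)\|_X^p\|P_w(\sigma+it)\|_X^{q-p}\frac{\sigma^{p-1}\eta}{(\sigma+\eta)^2+t^2}\,dt\,d\sigma\leq\|P_w\circ\phi_\eta\|_{H_q(\DD,X)}^q.
\end{equation*}
Then integrate in $w$ over $\TT$: by Lemma \ref{ww} (or rather its obvious extension, since the integrand is now $\|P_w'\|_X^p\|P_w\|_X^{q-p}$, still a rotation-covariant expression in the coefficients) the right-hand side becomes $\|P\|_{\h_q(X)}^q$ after the same Fubini/Fatou chain used in the proof of Theorem \ref{L-P}, and the inner $t$-integral against the probability kernel $\frac{\sigma+\eta}{\pi((\sigma+\eta)^2+t^2)}\,dt$ collapses; letting $\eta\to+\infty$ with dominated convergence yields
\begin{equation*}
\|P(+\infty)\|_X^q+C\int_0^{+\infty}\int_\TT\|\B^{-1}(P_\sigma')(z)\|_X^p\|\B^{-1}(P_\sigma)(z)\|_X^{q-p}\,dm_\infty(z)\,\sigma^{p-1}\,d\sigma\leq\|P\|_{\h_q(X)}^q.
\end{equation*}
Finally, density of $\mathcal{P}(X)$ in $\h_q(X)$ together with the horizontal-translation trick (apply the polynomial inequality to $f_\delta$ and let $\delta\to0$ via Lemma \ref{decr} and monotone convergence, exactly as at the end of the proof of Theorem \ref{L-P}) upgrades this from polynomials to all $f\in\h^+_q(X)$, giving the stated bound.

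The main obstacle is the mixed-norm Littlewood--Paley inequality on the disk itself: one must check that $p$-uniform PL-convexity of $X$ suffices to control $\int_\DD\|f'\|_X^p\|f\|_X^{q-p}(1-|\xi|^2)^{p-1}dA$ by $\|f\|_{H_q(\DD,X)}^q$. The delicate point is that $\|f\|_X^{q-p}$ may vanish where $f$ does, so one cannot simply divide; the cleanest route is to note $\|f\|_X^q$ is (globally) subharmonic-dominated and run the standard argument (as in \cite[Section 2]{BP} or the proof of the scalar Littlewood--Paley inequality) keeping the weight $\|f\|_X^{q-p}$ throughout, which is exactly what makes the exponents match. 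A secondary technical care-point is justifying the Fubini--Fatou interchange and the passage $\eta\to+\infty$ for the new integrand, but these are routine once the polynomial case is set up, since everything is finite and nonnegative.
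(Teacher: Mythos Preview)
Your approach is essentially the paper's: the mixed-norm disk inequality you propose is precisely \cite[Theorem~2.6]{BP}, which the paper simply invokes, and the Cayley-transform/rotation-invariance/translation argument then runs verbatim as you outline. One small caveat on the last step: Lemma~\ref{decr} does not obviously give monotonicity in $\delta$ of the mixed integrand $\int_{\TT}\|\B^{-1}(f'_{\sigma+\delta})\|_X^{p}\|\B^{-1}(f_{\sigma+\delta})\|_X^{q-p}\,dm_\infty$, so ``monotone convergence exactly as in Theorem~\ref{L-P}'' is not quite literal; the paper instead substitutes $\sigma'=\sigma+\delta$ (making the $\TT$-integral independent of $\delta$) and applies Fatou's lemma to the increasing weight $(\sigma'-\delta)^{p-1}\chi_{[\delta,\infty)}(\sigma')$.
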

\begin{proof}
If $f\in\h^+_q(X)$, then for any $\sigma>0$, $f_{\sigma}\in\h_q(X)$ and by Proposition \ref{deri}, $f'_{\sigma}\in\h_q(X)$. Hence both $\B^{-1}(f_{\sigma})$ and $\B^{-1}(f'_{\sigma})$ are well-defined.

Using \cite[Theorem 2.6]{BP} instead of \eqref{LiPa}, and arguing as in the proof of Theorem \ref{L-P}, we obtain that for any $g\in\h_q(X)$,
\begin{align}\label{ggg}
\nonumber\|g(+\infty)\|^q_X+\int_0^{+\infty}\sigma^{p-1}\int_{\TT}\left\|\B^{-1}(g'_{\sigma})\right\|^p_X
\left\|\B^{-1}(g_{\sigma})\right\|^{q-p}_Xdm_{\infty}d\sigma\\
\leq C\|g\|^q_{\h_q(X)}.
\end{align}
Suppose now $f\in\h^+_q(X)$. Then for any $\delta>0$, applying \eqref{ggg} to $f_{\delta}$ yields that
\begin{align*}
\|f(+\infty)\|^q_X+\int_0^{+\infty}\sigma^{p-1}\int_{\TT}\left\|\B^{-1}(f'_{\sigma+\delta})\right\|^p_X
\left\|\B^{-1}(f_{\sigma+\delta})\right\|^{q-p}_Xdm_{\infty}d\sigma\\
\leq C\|f\|^q_{\h^+_q(X)}.
\end{align*}
Equivalently,
\begin{align*}
\|f(+\infty)\|^q_X+\int_0^{+\infty}\chi_{\delta}(\sigma)(\sigma-\delta)^{p-1}\int_{\TT}\left\|\B^{-1}(f'_{\sigma})\right\|^p_X
\left\|\B^{-1}(f_{\sigma})\right\|^{q-p}_Xdm_{\infty}d\sigma\\
\leq C\|f\|^q_{\h^+_q(X)},
\end{align*}
where $\chi_{\delta}$ is the characteristic function of $[\delta,+\infty)$. Since the above inequality holds for any $\delta>0$, letting $\delta\to0$ and using Fatou's lemma, we conclude the desired result.
\end{proof}

We now turn to the case $1\leq p\leq2$. In this case, the following Littlewood--Paley inequality for scalar-valued analytic functions on the unit disk $\DD$ is well-known (see \cite{LP36} or \cite[Theorem 4.4.4]{JVA}):
\begin{equation}\label{LP<2}
\|f\|_{H_p(\DD,\C)}\leq|f(0)|+C\left(\int_{\DD}|f'(\xi)|^p(1-|\xi|^2)^{p-1}dA(\xi)\right)^{1/p}.
\end{equation}
This inequality should be understood as follows: if the integral at the right-hand side is finite, then $f\in H_p(\DD,\C)$ and the norm of $f$ is less than or equal to the quantity at the right-hand side; but if the integral is infinite, then nothing can be said about $f$. Based on \eqref{LP<2}, we can use the same method as in the proof of Theorem \ref{L-P} to establish the following Littlewood--Paley inequality for scalar-valued Dirichlet series.

\begin{theorem}
Let $1\leq p\leq2$ and $f\in\D(\C)$. If for any $\sigma>0$, $f'_{\sigma}\in\h_p(\C)$, and
$$\int_0^{+\infty}\|f'_{\sigma}\|^p_{\h_p(\C)}\sigma^{p-1}d\sigma<\infty,$$
then $f\in\h_p(\C)$. Moreover, there exists some constant $C>0$, independent of $f$, such that
$$\|f\|_{\h_p(\C)}\leq |f(+\infty)|+C\left(\int_0^{+\infty}\|f'_{\sigma}\|^p_{\h_p(\C)}\sigma^{p-1}d\sigma\right)^{1/p}.$$
In other words, if $1\leq p\leq2$, then $\mathscr{D}^p_{p-1}(\C)\subset\h_p(\C)$, and the inclusion is bounded.
\end{theorem}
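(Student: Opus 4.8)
The plan is to imitate the proof of Theorem \ref{L-P}: first establish the inequality for Dirichlet polynomials by transplanting the disk estimate \eqref{LP<2} to the half-plane $\C_0$ through the Cayley transform $\phi_\eta$, and then pass to a general $f\in\D(\C)$ by approximation. The one structural change is that the inequality now runs the other way — an upper bound for $\|f\|_{\h_p(\C)}$ — so a few of the estimates have to be reversed.

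For the polynomial case I would fix $P\in\mathcal{P}(\C)$, $\eta>0$ and $w\in\TT$. Since $P_w$ is a Dirichlet polynomial it is bounded on $\overline{\C_0}$, so $P_w\circ\phi_\eta\in H^\infty(\DD)\subset H_p(\DD,\C)$ and \eqref{LP<2} applies; performing \emph{exactly} the change of variables $s=\phi_\eta(\xi)$ carried out in the proof of Theorem \ref{L-P} turns it into
\[
\|P_w\circ\phi_\eta\|_{H_p(\DD,\C)}\le|P_w(\eta)|+C\left(2^p\int_0^{+\infty}\!\!\int_{\R}|P'_w(\sigma+it)|^p\frac{\sigma^{p-1}\eta}{(\sigma+\eta)^2+t^2}\,dt\,d\sigma\right)^{1/p}.
\]
Next I would take $L^p(dm_\infty(w))$-norms over $\TT$. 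On the left, $\|P_w\circ\phi_\eta\|_{H_p(\DD,\C)}^p\ge\int_{\T}|P_w(\phi_\eta(r\zeta))|^p\,dm(\zeta)$ for $0<r<1$; integrating in $w$, Fubini and Lemma \ref{ww} give $\int_{\T}\|P_{\Re\phi_\eta(r\zeta)}\|_{\h_p(\C)}^p\,dm(\zeta)$, which tends to $\|P\|_{\h_p(\C)}^p$ as $r\to1$ by dominated convergence ($\Re\phi_\eta(r\zeta)\to0$ for $\zeta\neq1$, $\sigma\mapsto\|P_\sigma\|_{\h_p(\C)}$ is continuous at $0$ for a polynomial, and it is bounded by $\|P\|_{\h_p(\C)}$ by Lemma \ref{decr}). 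On the right, Minkowski's inequality in $L^p(\TT)$, Fubini, Lemma \ref{ww} (applied to $P$ and to $P'$), and $\int_{\R}\frac{\eta\,dt}{(\sigma+\eta)^2+t^2}=\frac{\pi\eta}{\sigma+\eta}\le\pi$ produce the bound $\|P_\eta\|_{\h_p(\C)}+2\pi^{1/p}C\big(\int_0^{+\infty}\|P'_\sigma\|_{\h_p(\C)}^p\sigma^{p-1}\frac{\eta}{\sigma+\eta}\,d\sigma\big)^{1/p}$. Letting $\eta\to+\infty$ (so $\|P_\eta\|_{\h_p(\C)}\to|P(+\infty)|$, while the integral converges up to $\int_0^{+\infty}\|P'_\sigma\|_{\h_p(\C)}^p\sigma^{p-1}\,d\sigma$ by monotone convergence) then yields the polynomial inequality
\[
\|P\|_{\h_p(\C)}\le|P(+\infty)|+C\left(\int_0^{+\infty}\|P'_\sigma\|_{\h_p(\C)}^p\sigma^{p-1}\,d\sigma\right)^{1/p},\qquad P\in\mathcal{P}(\C),
\]
with the sharp constant $1$ in front of $|P(+\infty)|$.

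To pass to a general $f$, I would invoke the uniformly bounded Fej\'er-type summation operators $R_k$ on $\h_p(\C)$ underlying the density of polynomials (\cite[Proposition 24.6]{DGMS}): each $R_k$ is a Dirichlet-coefficient multiplier with $\|R_k\|_{\h_p(\C)\to\h_p(\C)}\le1$, it fixes the constant term, it maps $\D(\C)$ into $\mathcal{P}(\C)$, and $R_kg\to g$ in $\h_p(\C)$ for every $g\in\h_p(\C)$; being a multiplier it commutes with translation and with $d/ds$, so $(R_kf)'_\sigma=R_k(f'_\sigma)$. For $f$ as in the statement, $f'_\sigma\in\h_p(\C)$ for $\sigma>0$, hence $\|(R_kf)'_\sigma\|_{\h_p(\C)}\le\|f'_\sigma\|_{\h_p(\C)}$ and, by dominated convergence against $2^p\|f'_\sigma\|_{\h_p(\C)}^p\sigma^{p-1}$ (integrable by hypothesis), $\int_0^{+\infty}\|R_k(f'_\sigma)-R_l(f'_\sigma)\|_{\h_p(\C)}^p\sigma^{p-1}\,d\sigma\to0$ as $k,l\to\infty$. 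Applying the polynomial inequality to $R_kf-R_lf$ (whose constant term vanishes) shows $\{R_kf\}$ is Cauchy in $\h_p(\C)$; its limit has $n$th Dirichlet coefficient $\lim_kc_n(R_kf)=c_n(f)$, so it equals $f$, and $f\in\h_p(\C)$. Finally $\|f\|_{\h_p(\C)}=\lim_k\|R_kf\|_{\h_p(\C)}$, and applying the polynomial inequality to each $R_kf$ and letting $k\to\infty$ (dominated convergence once more, now for $\int_0^{+\infty}\|R_k(f'_\sigma)\|_{\h_p(\C)}^p\sigma^{p-1}\,d\sigma$) gives the asserted bound; the equivalent form $\mathscr{D}^p_{p-1}(\C)\subset\h_p(\C)$ with bounded inclusion then follows at once from the polynomial inequality and the density of $\mathcal{P}(\C)$.

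The main difficulty I expect is the passage from polynomials to general $f$: the partial-sum operators $S_N$ grow like $\log N$ on $\h_p(\C)$ (see \eqref{partial}), so plain truncation does not converge, and one has to route the argument through a uniformly bounded, constant-term-preserving summation method together with a Cauchy-sequence argument in $\h_p(\C)$. A secondary point is that \eqref{LP<2} only yields membership plus a bound when its right-hand side is finite, which is why the polynomial case must be handled first (so that $P_w\circ\phi_\eta\in H_p(\DD,\C)$ is automatic), and that keeping the constant in front of $|f(+\infty)|$ equal to $1$ forces the use of Minkowski's inequality in $L^p(\TT)$ combined with the lower bound for $\|P_w\circ\phi_\eta\|_{H_p(\DD,\C)}$ coming from its boundary values on the imaginary axis, rather than simply raising the pointwise inequality to the $p$th power and integrating.
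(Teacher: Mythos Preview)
Your proposal is correct and follows precisely the route the paper indicates: the paper gives no proof beyond saying to repeat the method of Theorem~\ref{L-P} with \eqref{LP<2} in place of \eqref{LiPa}, and your polynomial step is exactly that transplant via the Cayley transform $\phi_\eta$, Lemma~\ref{ww}, and the limit $\eta\to+\infty$. Your passage to general $f$ via uniformly bounded Fej\'er-type multiplier operators (contractions, fixing the constant term, commuting with translation and differentiation) is a correct way to supply the approximation step the paper leaves implicit; this step genuinely needs more care here than in Theorem~\ref{L-P}, since one must first establish membership in $\h_p(\C)$ rather than just bound an already-finite norm, and your Cauchy-sequence argument in $\h_p(\C)$ handles this cleanly.
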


In order to establish the vector-valued version of the above theorem, one need to find the vector-valued version of \eqref{LP<2}. For harmonic functions on $\DD$ with values in a real, $p$-uniformly smooth Banach space, this was done in \cite{ADP}. Therefore, it is reasonable to guess that, for analytic functions on $\DD$ with values in a complex Banach space, the inequality of the form \eqref{LP<2} is related to some complex smoothness. However, to the best of our knowledge, we cannot find any references devoted to the notion of complex smoothness of Banach spaces. Here we are going to make some elementary attempts on this direction. Motivated by the modulus of smoothness of a real Banach space (see \cite[Definition 1.e.1]{LT}), we define the modulus of PL-smoothness $\rho_1^X(\tau)$ ($\tau>0$) of a complex Banach space $X$ as follows:
$$\rho_1^X(\tau):=\sup\left\{\int_{\T}\|x+\xi y\|_Xdm(\xi)-1:x,y\in X,\ \|x\|_X=1,\ \|y\|_X=\tau\right\}.$$
The Banach space $X$ is said to be uniformly PL-smooth if $\lim_{\tau\to0}\rho^X_1(\tau)/\tau=0$, and for $1<p\leq2$, $X$ is said to be $p$-uniformly PL-smooth if there exists $C>0$ such that $\rho^X_1(\tau)\leq C\tau^p$ for $\tau>0$. For any $1<p\leq 2$, it is not difficult to see that if there exists $C>0$ such that for any $X$-valued analytic function $f$ on $\DD$,
$$\|f\|_{H_p(\DD,X)}\leq\left(\|f(0)\|^p_X+C\int_{\DD}\|f'(\xi)\|^p_{X}(1-|\xi|^2)^{p-1}dA(\xi)\right)^{1/p},$$
then $X$ is $p$-uniformly PL-smooth. In order to establish the reverse implication, one need to consider the dual relation between uniform PL-smoothness and uniform PL-convexity (see \cite[Proposition 1.e.2]{LT} for the dual relation in real Banach spaces). This is of independent interest and is worthy to be investigated further.

%\noindent{\bf Acknowledgment.}

\end{document}